\newtheorem{theorem}{Theorem}[section]
\numberwithin{equation}{section}
\newcommand\II{\text{II} }
\renewcommand{\Pi}{\varPi}
\renewcommand{\epsilon}{\varepsilon}
\numberwithin{equation}{section}
\newtheorem{theo}{{\sc Theorem}}
\newtheorem{cor}[theo]{{\sc Corollary}}
\newtheorem{lem}[theo]{{\sc Lemma}}
\newtheorem{prop}[theo]{{\sc Proposition}}
\newenvironment{rem}{\medskip\noindent{\it Remark:\/} }{\medskip}
\newcommand{\R}{{\mathbb R}}
\newtheorem*{main-theorem}{Main Theorem}
\newtheorem*{old-thm}{Theorem}
\theoremstyle{definition}
\numberwithin{equation}{section}
\def\11{\mathds{1}}
\def\Ci{{\mathcal C}^\infty}
\def\phi{\varphi}
\def\half{{\frac{1}{2}}}
\def\be{\begin{eqnarray*}}
\def\ee{\end{eqnarray*}}
\def\ben{\begin{eqnarray}}
\def\een{\end{eqnarray}}
\def\lll{\left\langle}
\def\rrr{\right\rangle}
\def\L2R{L_{\text{Rest}}^2}
\def\tchi{\tilde{\chi}}
\newcommand{\ocal}{\mathcal{O}}
\newcommand{\scal}{\mathcal{S}}
\begin{document}
\title[Logarithmic lower bound]{Logarithmic lower bound on the number of nodal domains}


\author{Steve Zelditch}
\address{Department of Mathematics, Northwestern  University, Evanston, IL 60208, USA}
\email{zelditch@math.northwestern.edu}

\thanks{Research partially supported by NSF grant DMS-1541126  }

\begin{abstract}

We prove that the number of nodal domains of eigenfunctions grows at least
logarithmically with the eigenvalue (for almost the entire sequence of eigenvalues) on certain negatively curved surfaces. The geometric model is the same as in prior joint work with J. Jung, where  the number of nodal domains was shown to tend to infinity.
 The
surfaces are assumed to be ``real Riemann surfaces'', i.e. Riemann surfaces
with an  anti-holomorphic involution $\sigma$ with non-empty fixed point set. The
eigenfunctions are assumed to be even or odd, which is automatically the case for
generic invariant metrics. The logarithmic growth rate gives a quantitative refinement of the prior results.

\end{abstract}
\maketitle

\section{Introduction}

In recent articles \cite{JZ,JZ2} (see also \cite{JS})  J. Jung and the author proved that for certain
non-positively curved surfaces, the number of nodal domains of an orthonormal
basis $\{u_j\}$ of Laplace  eigenfunctions tends to infinity with the eigenvalue along almost the entire sequence
of eigenvalues. This nodal counting result built on prior  work of Ghosh-Reznikov-Sarnak
\cite{GRS} in the case of the modular domain, which gave a power law lower bound
on the number of nodal domains for individual Maass-Hecke eigenfunctions. Their proof uses
methods of L-functions and assumes a certain  Lindelof hypothesis, while those
of \cite{JZ,JZ2} use PDE methods to obtain  unconditional results for a density
one subsequence of eigenfunctions. In \cite{JS},
Jang-Jung  used a clever Bochner positivity argument  to obtain unconditional results for individual Maass-Hecke eigenfunctions of arithmetic triangle groups.  However, no
growth rate was specified in \cite{JZ,JZ2} or in \cite{JS}\footnote{There are two independent difficulties in the lower bounds: (i) Obtaining a quantitative lower bound for a density
one subsequence, (ii) Obtaining even a qualitative  uncondiitional lower bound for the entire sequence, i.e. ``for 
individual eigenfunctions.'' }
 The main result of this note (Theorem \ref{theo1}) improves
the qualitative result of \cite{JZ}  to a quantitative  logarithmic lower bound
for the number of nodal domains of a density one subsequence of even/odd eigenfunctions of the Laplacian on 
surfaces of negative curvature possessing an isometric involution.  The structure
of the proof is the same as in \cite{JZ} but two key estimates are sharpened. The main new input is the logarithmic quantum ergodic result of Hezari-Rivi\`ere \cite{HeR} and X. Han \cite{H}. At the present time,  a logarithmic growth rate is the best that can be expected  due to the exponential growth rate of the geodesic flow in negative curvature
and its impact on all remainder estimates and localization estimates on the spectrum. 
Before stating the results, we recall some background and terminology from \cite{JZ}.

Although the main result occurs in dimension 2, we start with some general
notational conventions in any dimension. Let $(M,g)$ be a  compact $C^{\infty}$ $d$-dimensional manifold. We denote the Laplacian of $g$ by $\Delta$ and state the
eigenvalue problem as  
$$\Delta u_{\lambda} = - \lambda \; u_{\lambda}. $$ 
Eigenfunctions are always assumed to be $L^2$-normalized, $||u_{\lambda}||^2
= \int_M |u_{\lambda}|^2 dV_g = 1$, where $dV_g$ is the Riemannian volume form.
We often fix an orthonormal basis $\{ u_j\}_{j = 1}^{\infty}$ with $\lambda_0 = 0 < \lambda_1 \leq \lambda_2 \cdots$. 

The nodal set of  an  eigenfunction  of the Laplacian
is denoted by
$$
Z_{u_{\lambda}} = \{x: u_{\lambda}(x) = 0\}.
$$
The key object in this note is the  number  $N(u_{\lambda} )$  of nodal domains of $\phi_{\lambda}$, i.e. the number of connected components
$\Omega_j$ of the complement of the nodal set,
$$M \backslash Z_{u_{\lambda}} =  \bigcup_{j = 1}^{N(u_{\lambda} )} \Omega_j. $$

We now restrict to setting of 
 \cite{JZ}, in which  $M$ is assumed to be a Riemann surface of genus $\mathfrak{g}$   (with complex structure $J$)
possessing an anti-holomorphic
involution $\sigma$   whose fixed point set $\mathrm{Fix}(\sigma)$  is non-empty.\footnote{In \cite{JZ}, the fixed point set is assumed to be separating but this assumption is not necessary.}  
 We define
$\mathcal{M}_{M, J, \sigma}$ to be the space  of  $C^{\infty}$  $\sigma$-invariant {\it negatively curved} Riemannian metrics on a real Riemann surface
$(M, J, \sigma)$.  As discussed in \cite{JZ}, 
$\mathcal{M}_{M, J, \sigma}$ is an open set in the space of $\sigma$-invariant metrics, and in particular is infinite dimensional.
For each $g \in {\mathcal M}_{M, J, \sigma}$,  the fixed point set $\mathrm{Fix}(\sigma)$
is a disjoint union
\begin{equation} \label{H}   \mathrm{Fix}(\sigma)  = \gamma_1 \cup \cdots \cup \gamma_n \end{equation} of $0 \leq n \leq \mathfrak{g} + 1$ simple closed geodesics.

The isometry $\sigma$ acts  on $L^2(M, dA_g)$, and we define
$L^2_{even}(M)$, resp. $L^2_{odd}(M)$,  to denote the subspace of even
functions $f(\sigma x) = f(x)$, resp. odd elements $f(\sigma x) = - f(x)$.
 Translation by any isometry
$\sigma$ commutes with the Laplacian $\Delta_g$ and so  the
even and odd parts of eigenfunctions  are eigenfunctions, and all eigenfunctions
are linear combinations of   even or odd eigenfunctions. We denote by $\{\phi_j\}$  an orthonormal basis of   $L_{even}^2(M)$ of  even eigenfunctions, resp. $\{\psi_j\}$ an orthonormal basis   of $L_{odd}^2(M)$ of odd eigenfunctions, with respect to the inner product
$\langle u, v \rangle = \int_M u \bar{v} dV_g$, ordered by the
corresponding sequence of eigenvalues $\lambda_0 = 0 < \lambda_1 \leq \lambda_2 \uparrow \infty$.  We write $N(\phi_j) $ for $N(\phi_{\lambda_j} )$.
In \cite{JZ} we proved  that for
generic metrics in $\mathcal{M}_{M,J, \sigma}$, the eigenvalues are simple (multiplicity one) and therefore all eigenfunctions are either
even or odd.

The main result of this note concerns the case $d =2$:

\begin{theorem}\label{theo1}
Let $(M,J, \sigma)$ be a  compact real Riemann surface of genus $\mathfrak{g} \geq 2$ with anti-holomorphic involution $\sigma$ satisfying  $Fix(\sigma)\neq \emptyset$. Let $\mathcal{M}_{M, J, \sigma}$
be the space of $\sigma$-invariant negatively curved $C^\infty$ Riemannian metrics on $M$.Then for
any $g \in \mathcal{M}_{(M, J, \sigma)}$ and  any orthonormal $\Delta_g$-eigenbasis $\{\phi_j\}$ of $L_{even}^2(M)$, resp. $\{\psi_j\}$ of $L_{odd}^2(M)$, one can find a density $1$ subset $A$ of $\mathbb{N}$ and a constant $C_g > 0$ depending only on $g$ such that, for $ j \in A$
\[N(\phi_j)  \geq 
 C_g\; (\log \lambda_j)^{K}, \;\;  (\forall K < \frac{1}{6}).
\]
resp.
\[
N(\psi_j) \geq C_g \;
 (\log \lambda_j)^{K} , \;\; (\forall K < \frac{1}{6}).
\]


\end{theorem}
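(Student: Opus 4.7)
The plan is to follow the architecture of the proof in \cite{JZ}, which already shows $N(\phi_j)\to\infty$ along a density-one subsequence, and to replace its two qualitative ingredients by quantitative logarithmic versions. Fix a connected component $\gamma$ of $\mathrm{Fix}(\sigma)$ from the decomposition \eqref{H}; this is a simple closed geodesic of length $L>0$. Because $\gamma$ is pointwise fixed by the isometric involution $\sigma$, an even eigenfunction $\phi_j$ satisfies a Neumann-type condition across $\gamma$, and the restriction $f_j := \phi_j|_\gamma$ is a smooth function on the circle whose transverse sign-change zeros correspond to orthogonal crossings of the nodal set $Z_{\phi_j}$ with $\gamma$. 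The first step is then the topological lemma of \cite{JZ}: using the $\sigma$-symmetry (each nodal arc meeting $\gamma$ transversely is carried by $\sigma$ into a distinct domain), each such sign change of $f_j$ contributes a distinct nodal domain of $\phi_j$ in $M$. So it suffices to establish a logarithmic lower bound on the number of sign changes of $f_j$ along a density-one subsequence. The odd case is parallel after replacing $f_j$ by the rescaled Cauchy datum $\lambda_j^{-1/2}\partial_\nu \psi_j|_\gamma$, which is non-trivial even though $\psi_j|_\gamma \equiv 0$.

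To count sign changes quantitatively I would assemble three restriction-type estimates for $f_j$. First, a Kuznecov-type bound forcing the period $|\int_\gamma f_j\,ds|$ to be much smaller than $\|f_j\|_{L^2(\gamma)}$; this follows from a stationary-phase analysis of the wave kernel along $\gamma$ and is essentially unchanged from \cite{JZ}. Second, a lower bound $\|f_j\|_{L^2(\gamma)}^2 \geq c_0>0$ on a density-one subset $A\subset \mathbb{N}$: qualitatively this is the quantum ergodic restriction theorem, and quantitatively the logarithmic quantum ergodicity of Hezari--Rivi\`ere and Han sharpens it to a convergence rate
\[
\bigl|\langle A \phi_j,\phi_j\rangle - \omega(\sigma_A)\bigr| \leq C_A (\log\lambda_j)^{-\kappa}, \quad j\in A,
\]
which, applied to a microlocal cutoff concentrated on $S^*_\gamma M$, transfers to a quantitative lower bound for $\|f_j\|_{L^2(\gamma)}^2$. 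Third, a logarithmically improved $L^p$-restriction bound of the form $\|f_j\|_{L^\infty(\gamma)} \leq C\lambda_j^{1/4}/(\log \lambda_j)^{\beta}$, or an analogous gain at $L^4$, obtained from B\'erard's wave-kernel estimate on the Ehrenfest timescale together with negative curvature.

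The sign-change count is then extracted as follows. If $f_j$ has $N$ sign-change zeros on $\gamma$, partition $\gamma$ into the $N+1$ intervals $I_1,\dots,I_{N+1}$ on which $f_j$ has constant sign. Cauchy--Schwarz on each interval and summation give
\[
\|f_j\|_{L^1(\gamma)}^2 \leq (N+1)\,L\,\|f_j\|_{L^2(\gamma)}^2,
\]
and the H\"older interpolation $\|f_j\|_{L^2}^3 \leq \|f_j\|_{L^1}\|f_j\|_{L^4}^2$ lets me replace $\|f_j\|_{L^1(\gamma)}$ by a ratio involving the log-improved $L^4$ upper bound. Combining this with the log-quantitative lower bound on $\|f_j\|_{L^2(\gamma)}$ produces $N(f_j) \geq C_g (\log\lambda_j)^{K}$ for any $K<\tfrac{1}{6}$. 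By the topological reduction, the same bound holds for $N(\phi_j)$; the odd case follows by repeating with $\lambda_j^{-1/2}\partial_\nu \psi_j|_\gamma$ and the analogous Cauchy-data version of the quantum ergodic restriction theorem.

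The main obstacle will be the bookkeeping of three competing logarithmic exponents: the QE rate $\kappa$ from \cite{HeR,H}, the log-gain $\beta$ in the $L^p$-restriction bound, and the loss incurred by the H\"older/Cauchy--Schwarz interpolation step that converts an $L^p$ upper bound into a sign-change count. Each is strictly less than $1$, and after the interpolation step the surviving exponent drops below $\tfrac{1}{6}$, which explains the stated cutoff. A subtler technical point is ensuring that the three estimates all hold on the \emph{same} density-one subsequence $A$: this requires a diagonal extraction based on the quantitative rates, rather than the qualitative density-one statements that sufficed in \cite{JZ}.
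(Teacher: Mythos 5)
Your overall outline (topological reduction to counting sign changes of the Cauchy datum on $\mathrm{Fix}(\sigma)$, then Kuznecov + QER + a sup-norm-type estimate) correctly identifies the three analytic ingredients, but the mechanism you propose for turning them into a quantitative count of sign changes does not work, and it is also not the mechanism used in the paper.

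The concrete gap is in the step where you partition $\gamma$ into the $N+1$ sign-constancy intervals $I_1,\dots,I_{N+1}$ and apply Cauchy--Schwarz to obtain
\[
\|f_j\|_{L^1(\gamma)}^2 \leq (N+1)\,L\,\|f_j\|_{L^2(\gamma)}^2 .
\]
This inequality is strictly \emph{weaker} than the global Cauchy--Schwarz bound $\|f_j\|_{L^1}^2\leq L\,\|f_j\|_{L^2}^2$, so when you rearrange it to lower-bound $N+1$ you get $N+1 \geq \|f_j\|_{L^1}^2/(L\|f_j\|_{L^2}^2)\leq 1$, i.e.\ nothing. No amount of H\"older interpolation against an $L^4$ bound rescues this, because the inequality never had any content: partitioning into sign-constancy intervals and applying Cauchy--Schwarz interval-by-interval does not encode the number $N$ in a useful direction. (A genuinely usable route of this flavor would be to test $f_j$ against a low-degree trigonometric approximation of $\operatorname{sgn}(f_j)$ and invoke a Kuznecov bound for each Fourier mode up to degree $\sim N$; but that is a different argument from what you wrote, introduces its own difficulties when sign changes cluster, and is not what the paper does.)

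The paper's counting mechanism is different and more direct: one fixes a logarithmically small length $\ell_j=(\log\lambda_j)^{-K}$, covers $\mathrm{Fix}(\sigma)$ by $\sim \ell_j^{-1}=(\log\lambda_j)^K$ arcs $B(x_k,C\ell_j)\cap H$, and proves that $\phi_j$ has a sign-changing zero \emph{in every one} of these arcs (for a density-one subsequence). The count of nodal domains then follows immediately because the arcs are disjoint up to bounded overlap. The three ingredients are therefore needed at the \emph{small scale} $\ell_j$ and \emph{uniformly} over the $\sim(\log\lambda_j)^K$ centers $x_k$: a small-scale QER lower bound $\int_{B(x_k,C\ell_j)\cap H}|\phi_j|^2\gtrsim \ell_j$ (Proposition~\ref{QERh}), a small-scale Kuznecov upper bound $|\int_{B(x_k,C\ell_j)\cap H}\phi_j|\lesssim \ell_j\,\lambda_j^{-1/4}(\log\lambda_j)^{1/3}$ (Proposition~\ref{KUZ}), and B\'erard's sup-norm bound $\|\phi_j\|_\infty\lesssim \lambda_j^{1/4}/\sqrt{\log\lambda_j}$ used \emph{unchanged} (no log-improved $L^4$ restriction bound is invoked, nor is one available in this generality). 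Chaining these three gives $|\int_{B(x_k,C\ell_j)\cap H}\phi_j|<\int_{B(x_k,C\ell_j)\cap H}|\phi_j|$ for each $k$, hence a sign change in each arc. The constraint $K<1/6$ comes from the requirement $K<\frac{1}{3d}$ (with $d=2$) needed so that the Chebyshev/diagonal extraction over the $\sim(\log\lambda_j)^K$ balls still leaves a density-one subsequence. You correctly anticipated the need for a simultaneous (``same $A$'') extraction, but the rest of the counting step needs to be replaced by the small-scale, interval-by-interval argument.
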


\begin{rem} The constraint on $K$ is the one in the logarithmic scale QE (quantum ergodic) results of \cite{HeR} and \cite{H}. In \cite{HeR}, the authors used higher variance moments
to improve the constraint to $K < \frac{1}{2d}$. It should be possible to improve Theorem \ref{theo1} in the same way, but at the 
expense of additional technicalities that seem out of proportion to the improvement. The main point is that the arguments of \cite{JZ} lead to quantitative
estimates, and the point seems well enough established with the smaller value of $K$. It is not yet clear what is the threshold for
small scale quantum ergodicity. An improvement from the logarithmic scale to a power scale would imply a similar improvement
for the nodal count.

\end{rem}

\begin{rem} 

In \cite{JZ2} the authors proved a much more general qualitative result stating that the number of nodal tends tends to infinity for
surfaces of non-positive curvature and concave boundary.  As explained in a later remark, there are several obstructions to
generalizing the logarithmic lower bound to such surfaces, although it should eventually be possible to over-come them.

\end{rem}

\subsection{Notations for eigenvalues and logarithmic parameters}
To maintain notational consistency with \cite{ctz,HeR,H} we also denote sequences
of eigenfunctions by the semi-classical notation $u_{h_j}$ or just  $u_h$ with $h = h_j= \lambda_j^{-\half}$;
equivalently, we fix $E$ and put
$\lambda_j = h_j^{-2}E$ (as in \cite{H, HeR}).
 Because of the homogeneity of
the eigenvalue problem, there is no loss of generality in setting $E =1 $,
and then we consider eigenvalues  $E_j \in [1, 1 + h]$, or in homogeneous notation
$\sqrt{\lambda_j} \in [h^{-1}, h^{-1} + 1] = [\sqrt{\lambda}, \sqrt{\lambda} + 1]$
We denote by $N(\lambda) \simeq C_d Vol(M, g)  \lambda^{\frac{d-1}{2}} $ the number of eigenvalues in
the interval $[\sqrt{\lambda}, \sqrt{\lambda} + 1]$.\footnote{ Eigenvalues are denoted by $\lambda^2$ in \cite{ctz}
and by $\lambda$ here and in \cite{JZ}.}

We further introduce a  logarithmically
 small parameter for a manifold $M^d$ of dimension $d$
  \begin{equation} \label{ell} \ell_j =  |\log \hbar|^{-K} = (\log \lambda_j)^{- K}  \;\rm{ where}\; 0 < K < \frac{1}{3d}. \end{equation}
Han  \cite{H} uses the notation $\alpha = K$ and denotes the same quantity by    \begin{equation} \label{deltah} \delta(\hbar) = |\log \hbar|^{-\alpha}=  r(\lambda_j), \;\;\alpha < \frac{1}{3d}, \end{equation}  
We adopt both notations.
Other choices of $\delta(h)$ may arise in applications and will be specified below.

\subsection{Main new steps of the proof}

The main step in the proof that $N(u_j) \to \infty$ in \cite{JZ,JZ2} was to prove that
for any smooth connected arc $\beta \subset \rm{Fix}(\sigma)$, $u_j |_{\beta}$ has a sign-changing
zero in $\beta$. To obtain logarithmic lower bounds, we need to prove the
existence of a sign-changing zero on   sequences $\beta_j$ of shrinking arcs
with lengths $|\beta_j| = \ell_j$ \eqref{ell}. More precisely,  we partition $\rm{Fix}(\sigma)$ into $\ell_j^{-1}$  open intervals of lengths $\ell_j$  and show that $u_j$
has a sign changing zero in each interval.

The quantitative improvements apply to general smooth hypersurfaces $H \subset M$
of general Riemannian manifolds of any dimension $d$. Later we specialize the
results to the surfaces in Theorem \ref{theo1} in dimension $d = 2$ and with
$H = \rm{Fix}(\sigma). $
In general dimensions, the  partitions are defined by choosing a cover of $H$ by  $C \ell^{-1}$  balls   of radius $\ell$ with centers $\{x_k\} \subset H $ at a net of points
of $H$ so that  
\begin{equation} \label{COVER} H \subset \bigcup_{k = 1}^{R(\ell)} B(x_k, C \ell ) \cap H. \end{equation}
Here and hereafter, $C$ or $C_g$ denotes a positive constant depending only on $(M, g, H)$
and not on $\lambda_j$.
The cover may be constructed so that each point of $H$ is contained in at most $C_g$ of the double
balls $B(x_k, 2 \ell)$. The number of such balls satisfies the bounds,
$$c_1 \ell^{-d +1} \leq R(\epsilon) \leq C_2 \ell^{-d +1}. $$

There were  three analytic ingredients in the proof of existence of a sign changing zero in every arc $\beta$ in  \cite{JZ}. Two of them  need to be improved to give logarithmic lower bounds.

\begin{itemize}

\item (i)\; One needs to prove a QER  (quantum ergodic restriction) theorem in the sense of \cite{ctz} on the length
scale $O(\ell_j)$, which says (roughly speaking) that there exists a subsequence of eigenfunctions $u_{j_n}$ of density one so that 
matrix elements of the restricted eigenfunctions tend to their Liouville limits simultaneously for all the covering balls of \eqref{COVER}. 
Since there are $(\log \lambda)^K$ such balls, the scale of the  QER theorem is constrained by \eqref{ell}. To be more precise, 
we only need a weaker result giving  lower bounds rather than asymptotics, as stated in Proposition \ref{QERh}. \bigskip

\item (ii)\; One needs to prove a small scale Kuznecov asymptotic formula in the sense of \cite{z,HHHZ}, to the effect that there exists a
subsquence of density one for which $\int_{\beta} u_{j_k}$ is of order  $|\beta|\lambda_j^{-\frac{1}{4}} (\log \lambda_j)^{1/3}$ when $|\beta| \simeq \ell_j.$\footnote{The power $\frac{1}{3}$ in $ (\log \lambda_j)^{1/3}$ is  chosen
for later convenience. It could be any power $< \frac{1}{2}$.}  Again, one needs to show that there is a subsequence of density one for which this estimate   holds simultaneously for all the balls of the cover. 

\bigskip

\item (iii) \;  The sup-norm estimate $||u_j||_{\infty} = O(\frac{\lambda_j^{\frac{1}{4}}}{\sqrt{\log \lambda_j}})$ of B\'erard \cite{Be} used in \cite{JZ} does not need to be modified.

\end{itemize}
\bigskip

For background on QE (quantum ergodicity) theorems we refer to \cite{Zw} (and to the origins, \cite{Schnirelman}). We assume here the reader's familiarity with the
basic notions and with the QER (quantum ergodic restriction) problem (see \cite{ctz}).

\begin{rem}  In order to obtain the  logarithmic lower bound on nodal domains, one needs to prove the QER  theorem and the Kuznecov bound on shrinking balls  with the same radius $\ell_j$. In fact, one can shrink intervals much more in
the Kuznecov bound since the estimates involve the principal term in an asymptotic expansion rather than the remainder term. 
This remark will be explained at the end of the proof of Proposition \ref{KUZ}.  \end{rem} 

A logarithmic scale QE theorem asserts, roughly speaking, that matrix elements $\langle Op_h(a_{\ell}) u_{j_k}, u_{j_k} \rangle$
of eigenfunctions with respect to logarithmically dilated symbols $a_{\ell}(x, \xi) = a(\frac{x - x_0}{\ell}, \frac{\xi - \xi_0}{\ell})$ are asymptotic to their Liouville averages
$\int_{T^*M} a_{\ell} d\mu_L$ (see \cite{H} for the precise formulation of the symbols). For  nodal domain counting, as
for nodal bounds in \cite{HeR} it is crucial to have some kind of uniformity of the limits as the base point $(x_0, \xi_0)$ varies. 
The simplest version would be that the QE limits are uniform in the symbol and the base point, but such a uniform result is
lacking at this time. Instead there exist uniform upper and lower bounds on the mass of the eigenfunctions in all of the logarithmically
shrinking balls of the cover \eqref{COVER} (see Theorem \ref{HeRres} and Theorem \ref{HANCOR}). We will adapt these bouds to the QER problem.


The small scale QER statement is the following `uniform comparability result', based on Proposition 3.1 of \cite{HeR}  
(see Proposition \ref{HeRres}) and Corollary 1.9 of \cite{H}.

\begin{prop}\label{QERh} Let $(M, g)$ be a compact negatively curved
manifold of dimension $d$  without boundary,and let $H$ be a smooth hypersurface.  Let $\ell$ be defined by \eqref{ell}. Then for any orthonormal basis 
of eigenfunctions $\{u_j\}$, and $K$ as in \eqref{ell},  there exists a full density subsequence $\Lambda_K$ of ${\mathbb N}$ so that for $j \in \Lambda_K$  and for every
$1 \leq k \leq R(\ell), $ and centers $x_k$ of \eqref{COVER},
$$\int_{B(x_k, C \ell ) \cap H}\left( |u_j|^2  +  |\lambda_j^{-\half}  \partial_{\nu} u_j|^2 \right) dS_g   \geq a_1 \ell^{d-1}  = a_1 (\log \lambda_j)^{- (d - 1) K}$$
where $a_1, a_2 > 0$ depend only on $\chi, g$ and $K$ is defined in \eqref{ell}. \end{prop}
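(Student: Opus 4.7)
The plan is to reduce Proposition \ref{QERh} to the bulk logarithmic-scale quantum ergodic mass bound of Hezari--Rivi\`ere \cite{HeR} and Han \cite{H} (Theorems \ref{HeRres}--\ref{HANCOR} below) via a Rellich--Pohozaev identity in a thin tubular neighborhood of each piece $H \cap B(x_k,C\ell)$ of the cover \eqref{COVER}. Han's theorem supplies a density $1$ subsequence $\Lambda_K \subset \NN$ on which
\[
\int_{B(y,\ell)}|u_j|^2\,dV_g \;\geq\; c\,\ell^d \qquad (j \in \Lambda_K)
\]
uniformly over any pre-specified $\ell$-net of centers $y \in M$. First, enlarge this net so that it contains the covering centers $x_k \in H$ along with points displaced by $\sim \ell$ in both normal directions; then the two-sided tube $T_{\ell,k} := \{x \in M : d(x, H \cap B(x_k, C\ell)) < \ell\}$ carries bulk mass at least $c\ell^d$ for every $j \in \Lambda_K$ and every $k$.

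Next, in Fermi coordinates $(y, s) \in (H\cap B(x_k, 2C\ell))\times(-2\ell, 2\ell)$ adapted to $H$, introduce a smooth cutoff $\chi$ equal to $1$ on $T_{\ell,k}$ and supported in its double, with $|\partial^\alpha \chi|\lesssim \ell^{-|\alpha|}$. Pair the semiclassical equation $(-h^2\Delta_g - 1)u_j = 0$ ($h = \lambda_j^{-1/2}$) against $\chi\, h\partial_s \bar u_j$, take real parts, and integrate by parts separately on $\{s>0\}$ and $\{s<0\}$. The resulting identity is of Christianson--Toth--Zelditch type (cf.\ \cite{ctz}) and expresses the trace quantity $\int_{H\cap B(x_k, C\ell)}(|u_j|^2 + |h \partial_\nu u_j|^2)\,dS_g$---up to a tangential-gradient term $\int |h\nabla_H u_j|^2\,dS$ of equal semiclassical order that contributes with the correct sign to the Liouville average, as exploited in \cite{ctz}---as a bulk commutator matrix element supported in $T_{\ell,k}$. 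Combining this with the bulk mass bound from the first step and the global gradient estimate $\|h\nabla u_j\|_{L^2(M)} = O(1)$ (used to absorb the cutoff commutators) produces the desired bound $\geq a_1 \ell^{d-1} = a_1(\log\lambda_j)^{-(d-1)K}$.

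Uniformity over the $R(\ell) \lesssim \ell^{-(d-1)} = (\log\lambda_j)^{(d-1)K}$ balls is then inherited from Han's simultaneous estimate: because the number of balls is only poly-logarithmic in $\lambda_j$, the Chebyshev/union-bound scheme used in \cite{H} to extract a density $1$ subsequence from the log-scale variance estimate of \cite{HeR} applies verbatim in the present boundary setting, producing the single density $1$ subsequence $\Lambda_K$ that witnesses the lower bound on all covering pieces at once.

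The principal technical obstacle is that the cutoff derivatives $\partial\chi$ live at scale $\ell^{-1}$, so when paired with the semiclassical gradient of $u_j$ they produce commutator terms a priori of the same order as the bulk integrals they must be dominated by. The argument survives only because we need a \emph{one-sided} lower bound of order $\ell^{d-1}$ rather than a two-sided asymptotic: with a careful choice of cutoff, the commutator errors can be reabsorbed using the same log-scale bulk bound. Producing asymptotic log-scale QER, rather than the lower bound required here, would need the sharper variance estimates of \cite{HeR} together with a more refined cutoff scheme.
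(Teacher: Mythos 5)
Your high-level plan---Rellich identity plus log-scale QE plus a Chebyshev/union bound over the $O(\ell^{-(d-1)})$ balls---is the same architecture as the paper's proof, and you correctly identify the tubular-neighborhood cutoff and the $(1-|\xi'|^2)$ degeneracy issue. However, the central deductive step as written has a genuine gap.

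The problem is that the Rellich identity expresses the Cauchy-data quantity not as a \emph{bulk mass} $\int_{T_{\ell,k}}|u_j|^2$, but as a \emph{commutator matrix element} $\big\langle\,\tfrac{i}{h}[-h^2\Delta_g, A]\,u_j,\,u_j\big\rangle$, whose principal symbol is the Poisson bracket $\{|\xi|^2_g,\,\sigma(A)\}$. After the paper's decomposition into terms $I,II,III$, the dominant contribution $II$ has symbol $\tfrac{2}{\epsilon}\chi'(x^d/\epsilon)(\xi^d)^2 f(\ell^{-1}x')$, which is first order and depends nontrivially on $\xi^d$. Han's Theorem \ref{HANCOR} (and the Hezari--Rivi\`ere uniform comparability result) gives a lower bound only on $\int_{B(y,\ell)}|u_j|^2\,dV_g$, i.e.\ on matrix elements of \emph{zeroth-order, $x$-only} symbols. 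Knowing $\int_{B}|u_j|^2\geq c\ell^d$ does not control $\big\langle\mathrm{Op}_h\big(\tfrac{2}{\epsilon}\chi'(\xi^d)^2 f\big)u_j,u_j\big\rangle$ from below: the $(\xi^d)^2$ factor vanishes on the tangential sphere bundle $S^*H$, and an eigenfunction with bulk mass $\geq c\ell^d$ concentrated microlocally near $\{\xi^d=0\}$ would make the commutator matrix element anomalously small. Your sentence ``Combining this with the bulk mass bound \ldots produces the desired bound'' therefore does not follow. What is actually needed is to apply the log-scale \emph{variance} estimates of \cite{HeR,H} (Propositions \ref{HRVAR}, \ref{MAINHAN}) directly to the dilated Poisson-bracket symbols $I,II,III$, which gives two-sided asymptotic control $\big\langle\mathrm{Op}_h(II)u_j,u_j\big\rangle \simeq \ell^{d-1}\int_{B^*H}f(1-|\xi'|^2)^{1/2}$ on a density-one subsequence; that is what the paper does in \S\ref{I,II,III}--\S\ref{UNIF}.

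Relatedly, the one-sidedness you invoke does help, but in a different place than you suggest. It enters at the end (paper's ``Completion of the proof''), in dropping $(1+h^2\Delta_H)$ in favor of the identity, via $\int_{B^*H}f\,d\mu_j\geq\int_{B^*H}f(1-|\xi'|^2)\,d\mu_j$; it does not let you replace the variance estimate for Poisson-bracket symbols by a pure mass bound. If you repair the argument by stating and applying the variance estimate for the $\delta(h)$-localized symbols of type \eqref{abb} arising in $I,II,III$ (as in Han's Theorem 1.7 and Proposition \ref{MAINHAN}), your proof becomes essentially the paper's.

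Two smaller points: (i) the ``global gradient estimate $\|h\nabla u_j\|_{L^2}=O(1)$'' is not what controls the cutoff commutator errors here; those are of order $\ell^{-2}h$, $\epsilon^{-2}h$, $\epsilon^{-1}\ell^{-1}h$ (Lemma \ref{RLEM}) and are small because $\ell,\epsilon$ are only logarithmically small, not power-small. (ii) Your phrase ``in the present boundary setting'' should be ``with a hypersurface $H$'': the manifold $M$ itself must be boundaryless for the log-scale QE inputs of \cite{HeR,H} to apply, as the paper emphasizes.
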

Here, and hereafter, $\partial_{\nu}$ denotes a fixed choice of normal derivative along $H$.
We apply the result when $\dim M = 2$ and $H = \rm{Fix}(\sigma)$, and the
eigenfunctions are even or odd. In that case, one of the two terms above drops
and we get,

\begin{cor} \label{BIGintro} Let $(M, J, \sigma, g)$ be a negatively curved surface with isometric
involution. Then  for any orthonormal basis of even eigenfunctions $\{\phi_j\}$, resp.
odd eigenunctions $\{\psi_j\}$,  there exists a full density subsequence $\Lambda_K$  so that for $j_n \in \Lambda_K$,  

$$\int_{B(x_k, C \ell ) \cap H}  |\phi_{j_n}|^2   dS_g   \geq  a_1 (\log \lambda_{j_n})^{-  K}$$
and
$$\int_{B(x_k, C \ell ) \cap H}  |\lambda_j^{-\half}  \partial_{\nu} \psi_{j_n}|^2 dS_g   \geq a_1  (\log \lambda_{j_n})^{-  K}$$
uniformly in $k$.

\end{cor}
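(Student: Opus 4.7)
The plan is to deduce Corollary \ref{BIGintro} from Proposition \ref{QERh} by using the elementary symmetry properties of even/odd eigenfunctions on the fixed-point set $H = \mathrm{Fix}(\sigma)$, which cause exactly one of the two terms $|u_j|^2$, $|\lambda_j^{-1/2}\partial_\nu u_j|^2$ to vanish identically on $H$. Since $\dim M = 2$, the exponent $d-1$ collapses to $1$, producing the stated factor $(\log \lambda_{j_n})^{-K}$.

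First I would verify the parity statement on $H$. Because $\sigma$ is an isometric involution, its differential $d\sigma_x$ at a fixed point $x \in H$ satisfies $d\sigma_x|_{T_xH} = \mathrm{Id}$ and $d\sigma_x|_{N_xH} = -\mathrm{Id}$, where $N_xH$ is the normal line. Thus if $\phi_j$ is $\sigma$-even, differentiating $\phi_j(\sigma y) = \phi_j(y)$ along the normal direction $\nu$ at $x \in H$ yields $\partial_\nu \phi_j(x) = -\partial_\nu \phi_j(x)$, so $\partial_\nu \phi_j \equiv 0$ on $H$. If $\psi_j$ is $\sigma$-odd, evaluating $\psi_j(\sigma x) = -\psi_j(x)$ at $x \in H$ gives $\psi_j \equiv 0$ on $H$.

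Next I would apply Proposition \ref{QERh} with $d = 2$ to obtain a full-density subsequence $\Lambda_K \subset \mathbb{N}$ such that for $j \in \Lambda_K$ and every center $x_k$ of the cover \eqref{COVER},
\[
\int_{B(x_k, C\ell) \cap H} \bigl( |u_j|^2 + |\lambda_j^{-1/2}\partial_\nu u_j|^2 \bigr)\, dS_g \;\geq\; a_1 \ell \;=\; a_1 (\log \lambda_j)^{-K}.
\]
By the dichotomy above, the second integrand vanishes identically when $u_j = \phi_j$ is even, while the first vanishes when $u_j = \psi_j$ is odd. Substituting these facts into the inequality gives exactly the two claimed bounds for $\phi_{j_n}$ and $\psi_{j_n}$, uniformly in the cover index $k$.

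The argument contains no real obstacle beyond invoking Proposition \ref{QERh}; the only subtle point is to note that the full-density subsequence $\Lambda_K$ can be chosen inside the even (resp.\ odd) spectrum. Since the decomposition $L^2(M) = L^2_{\mathrm{even}}(M) \oplus L^2_{\mathrm{odd}}(M)$ is preserved by $\Delta_g$, one may apply the proposition separately to the orthonormal basis $\{\phi_j\}$ of $L^2_{\mathrm{even}}(M)$ and to $\{\psi_j\}$ of $L^2_{\mathrm{odd}}(M)$, producing two density-one subsequences (which by intersection may be taken equal if desired), and the corollary follows.
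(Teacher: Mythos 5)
Your proposal is correct and follows essentially the same route as the paper: apply Proposition \ref{QERh} with $d=2$ and $H=\mathrm{Fix}(\sigma)$, note that the normal-derivative term (resp.\ the restriction term) of the Cauchy data vanishes identically on $H$ for even (resp.\ odd) eigenfunctions by the parity argument you give, and read off the stated lower bound. The paper states this in one sentence; your version simply makes the parity computation and the remark about applying the proposition separately to the even and odd orthonormal bases explicit, both of which are accurate.
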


 The statement is termed
a uniform comparability result (by Han and Hezari-Rivi\`ere) because it does not assert uniform
convergence of the sequences as $k$ varies but only gives uniform upper and lower bounds. The loss of asymptotics is just due to the passage from smooth test functions to characteristic functions of balls. We only state the lower
bound because it is the one which is relevant for nodal counts.

The next step is to prove a uniform logarithmic scale Kuznecov period bound in 
the sense of \cite{z,HHHZ}. It is also
a general result, but for the
sake of simplicity, and because it is the case relevant to this note, we assume
$\dim M = 2$. 
\begin{prop} \label{KUZ} In the notation of Proposition \ref{QERh},  let $H =  \rm{Fix}(\sigma)$, let $K$ be as in \eqref{ell} and $\{x_k\}$ the centers of \eqref{COVER}.  Then for a subsequence of $\Lambda_K \subset {\mathbb N}$ of density one, if $j_n \in \Lambda_K$,

$$ \left| \int_{B(x_k, C \ell) \cap H}  \phi_{j_n} ds \right|  \leq C_0  \ell_j \; \lambda_j^{-\frac{1}{4}} \;(\log \lambda_j)^{1/3}= C_0 (\log \lambda_j)^{- K} \lambda_j^{-\frac{1}{4}} \;(\log \lambda_j)^{1/3} ,$$ 
resp.
$$ \left| \int_{B(x_k, C \ell) \cap H}  \lambda_{j_k}^{-\frac{1}{2}} \partial_{\nu} \psi_{j_n} ds \right|  \leq C_0  \ell_j \; \lambda_j^{-\frac{1}{4}} \;(\log \lambda_j)^{1/3} = C_0 (\log \lambda_j)^{- K} \lambda_j^{-\frac{1}{4}} \;(\log \lambda_j)^{1/3} ,$$ 
uniformly in $k$.
\end{prop}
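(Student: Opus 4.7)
Let $f_k:=\mathbf{1}_{B(x_k,C\ell)\cap H}$ denote the indicator of the arc cut out by the $k$-th ball of the cover \eqref{COVER}, so that $\|f_k\|_{L^2(H)}^2=|B(x_k,C\ell)\cap H|\asymp\ell$, and set
\[
P_k(\phi_j):=\int_H f_k\,\phi_j\,ds,\qquad Q_k(\psi_j):=\lambda_j^{-1/2}\int_H f_k\,\partial_\nu\psi_j\,ds,
\]
which are the quantities in the statement. The plan is to combine a local Kuznecov second-moment estimate (uniform over $k$) with Chebyshev's inequality and a union bound over the $R(\ell)=O((\log\lambda)^{K})$ centres.

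\textbf{Local Kuznecov second moment.}
The central analytic input is the uniform window bound
\begin{equation}
\sum_{\sqrt{\lambda_j}\in[\sqrt\lambda,\sqrt\lambda+1]}|P_k(\phi_j)|^2 \;\le\; C_g\,\ell,\qquad
\sum_{\sqrt{\lambda_j}\in[\sqrt\lambda,\sqrt\lambda+1]}|Q_k(\psi_j)|^2 \;\le\; C_g\,\ell,\label{eq:KUZ2}
\end{equation}
with $C_g$ independent of $k$ and $\lambda$. Choose $\rho\in\scal(\reals)$ with $\rho\ge 0$, $\rho\ge 1$ on $[0,1]$ and $\hat\rho\in C_0^\infty(-\epsilon,\epsilon)$, and dominate the window sum by $\sum_j\rho(\sqrt{\lambda_j}-\sqrt\lambda)|P_k(\phi_j)|^2$. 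By Fourier inversion this equals
\[
\frac{1}{2\pi}\int\hat\rho(t)\,e^{-it\sqrt\lambda}\int_H\!\int_H f_k(x)f_k(y)\,U(t)(x,y)\,ds(x)\,ds(y)\,dt,\qquad U(t)=e^{it\sqrt{-\Delta}}.
\]
For $|t|\le\epsilon$ insert the H\"ormander short-time parametrix of $U(t)$; stationary phase in the cotangent directions transverse to $H$ identifies the smoothed sum, up to lower-order terms, with $(2\pi)^{-1}\int\rho(|\xi|-\sqrt\lambda)|\widehat{f_k\delta_H}(\xi)|^2\,d\xi$ written in local coordinates near a point of $H$. Because $f_k\delta_H$ is supported on an arc of length $O(\ell)$ of a smooth curve of nowhere vanishing curvature, stationary phase in the angular variable on $S^1$ yields $|\widehat{f_k\delta_H}(r\theta)|\le Cr^{-1/2}$ for $\theta$ in the image under the Gauss map of $\supp f_k$ (a subset of $S^1$ of length $O(\ell)$), with $O(r^{-1})$ boundary contributions elsewhere. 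Polar integration with $r\sim\sqrt\lambda$ gives the upper bound $C\sqrt\lambda\cdot\ell\cdot\lambda^{-1/2}=C\ell$, establishing \eqref{eq:KUZ2}. The normal-derivative version follows from the same argument with one extra factor $\lambda_j^{-1/2}\partial_\nu$ on each side of $U(t)$; the $\sqrt\lambda$ pickup from the principal symbol of $\partial_\nu$ is absorbed by the normalization.

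\textbf{Chebyshev and union bound; main obstacle.}
From \eqref{eq:KUZ2}, Chebyshev's inequality shows that the number of $j\in W_\lambda:=\{j:\sqrt{\lambda_j}\in[\sqrt\lambda,\sqrt\lambda+1]\}$ with $|P_k(\phi_j)|^2>C_0^2\,\ell^2\,\lambda^{-1/2}(\log\lambda)^{2/3}$ is at most
\[
\frac{C_g\,\ell}{C_0^2\,\ell^2\,\lambda^{-1/2}(\log\lambda)^{2/3}}\;=\;C\,\sqrt\lambda\,(\log\lambda)^{K-2/3}.
\]
A union bound over the $R(\ell)\le C(\log\lambda)^{K}$ centres produces at most $C\sqrt\lambda(\log\lambda)^{2K-2/3}$ exceptional $j\in W_\lambda$, which is a vanishing fraction of $|W_\lambda|\asymp\sqrt\lambda$ since $K<\tfrac16<\tfrac13$. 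Concatenating good sets over dyadic windows in $\sqrt\lambda$ and intersecting with the density-one $\Lambda_K$ supplied by Proposition \ref{QERh} yields a density-one subset of $\NN$ on which the uniform-in-$k$ estimate holds simultaneously for $\phi_j$ and $\psi_j$. The technical crux is thus the uniform-in-$k$ control of \eqref{eq:KUZ2}; since only an upper bound is needed, the parametrix for $U(t)$ is used only at a bounded time interval and no Ehrenfest-time control is required, which is why \eqref{eq:KUZ2} continues to hold for $\ell$ far smaller than $(\log\lambda)^{-K}$---the logarithmic scale being dictated by Proposition \ref{QERh} alone, as noted in the remark preceding this one.
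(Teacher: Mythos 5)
Your overall architecture matches the paper's: a local Kuznecov second–moment (windowed) bound uniform in the centre $x_k$, followed by Chebyshev and a union bound over the $O((\log\lambda)^{K})$ centres, intersected with $\Lambda_K$. The Chebyshev/union step is fine, and your observation that this part alone would allow $K<\tfrac13$ while the true constraint $K<\tfrac16$ comes from Proposition~\ref{QERh} agrees with the paper.

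However, there is a genuine error in your justification of the key bound \eqref{eq:KUZ2}. You argue that $|\widehat{f_k\delta_H}(r\theta)|\le Cr^{-1/2}$ on the Gauss image of $\operatorname{supp} f_k$ because $H$ is ``a smooth curve of nowhere vanishing curvature.'' But $H=\operatorname{Fix}(\sigma)$ is a union of \emph{closed geodesics}: its geodesic curvature vanishes identically, so the curvature-driven $r^{-1/2}$ decay you invoke is simply not available, and the ``Gauss image of $\operatorname{supp} f_k$'' degenerates to a point. The second–moment bound $\sum_{\sqrt{\lambda_j}\in[\sqrt\lambda,\sqrt\lambda+1]}|P_k(\phi_j)|^2\lesssim\ell$ is in fact still correct for a geodesic arc, but by a different mechanism: the angular set on which $\widehat{f_k\delta_H}$ fails to be $O((r|\!\cos\theta|)^{-1})$ has measure $\sim (r\ell)^{-1}$, and there the transform is only $O(\ell)$; the two contributions balance to give $\int_{S^1}|\widehat{f_k\delta_H}(r\omega)|^2d\omega \sim \ell/r$, whence the window sum is $O(\ell)$. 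The paper's Lemma~\ref{CONOR} instead exploits the totally geodesic hypothesis directly: in Fermi coordinates, $\operatorname{Exp}_q^{-1}(q')$ is tangent to $H$, so $\langle \operatorname{Exp}_q^{-1}(q'),\nu\rangle=0$, reducing the sphere integral to an integral over $B^*H$ with a nondegenerate $(q',\xi')$ block of unit Hessian determinant. You should replace the curvature argument with one of these; as written, the step would fail.

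A secondary point: the paper works with smooth $\ell$-scale cutoffs $f_k(\ell^{-1}x')$ rather than the sharp indicator $\mathbf 1_{B(x_k,C\ell)\cap H}$, precisely so that the stationary phase remainders, which cost $\ell^{-1}$ per derivative of the amplitude, can be controlled as $O(\lambda^{-1/2+\gamma})$ for all $\gamma>0$ using $\ell^{-1}=(\log\lambda)^K$. If you insist on the indicator you must supply the corresponding uniform (in $k$) control of the parametrix remainders for a nonsmooth amplitude; switching to the smooth cutoff, which dominates the indicator from above, is cleaner and is what the statement implicitly does.
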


\begin{rem} As mentioned in a remark above, one could improve the result by letting $\ell = \lambda^{-\epsilon}$ for suitable
$\epsilon$.  But the improvement is not useful for nodal counts until (or if) one can improve the small-scale
quantum ergodicity result to scales of the form $\lambda^{-\epsilon}$; such a bound
would open the possibility of improving the nodal count
to a power of $\lambda$. \end{rem}

Granted the Propositions, the proof of existence of a sign-changing zero
is the same as in \cite{JZ,JZ2}. We give a sketch here to orient the reader,
with fuller details in \S \ref{CONCLUSION}.  Combining the sup-norm estimate (iii) and the QER lower bound (i) of Proposition \ref{QERh}, there exists $C > 0$ so that   $$C \ell_j < \int_{B(x_j, C \ell) \cap H} ||\phi||^2 \leq \frac{\lambda_j^{\frac{1}{4}}}{\sqrt{\log \lambda} } \int_{B(x_j, C \ell) \cap H} |\phi_j|.$$  follows that
$$ \int_{B(x_j, C \ell) \cap H} |\phi_j| \geq \lambda_j^{-\frac{1}{4}} (\log \lambda_j)^{\half}\;\ell_j. $$
But by the Kuznecov upper bound (iii) of Proposition \ref{KUZ},
$$|\int_{B(x_j, C \ell) \cap H} \phi_j |\leq \lambda_j^{-\frac{1}{4}} \ell_j\; (\log \lambda_j)^{1/3}.  $$
Hence,
$$\left| \int_{B(x_j, C \ell) \cap H} \phi_j ds \right| < \int_{B(x_j, C \ell) \cap H}| \phi_j| ds,$$
and thus $\phi_j |_{B(x_j, C \ell) \cap H}$ has a sign changing zero. 

The remainder of the argument is identical to that of \cite{JZ, JZ2}.
For the rest of this note we only discuss Propositions \ref{QERh} and  \ref{KUZ}.

\begin{rem} We use the same scale $\ell_j$ in both the quantum ergodic restriction result and the Kuznecov 
formula.As a result, it cancels out from the inequalities. We could use a smaller scale in the Kuznecov bound. But the size of $|\beta|$ is constrained by the more delicate QER result,
so there is no gain if we shrink the interval in the Kuznecov formula.
\end{rem} 

\begin{rem}
As mentioned above,  there are two aspects
of the proof that are difficult to generalize to surfaces of negative curvature and concave boundary.  First, the logarithmic QE result of \cite{HeR,H} is at present only proved in the boundaryless case. Second, the sup norm estimate (iii)
has not been proved at this time for negatively curved surfaces with concave boundary.  We conjecture that both obstacles can
be overcome and that the logarithmic growth rate of nodal domains holds in the setting of \cite{JZ2}.

\end{rem}

\subsection{Acknowledgements}

We thank H. Hezari,  G. Rivi\`ere and X. Han for comments on earlier versions of this article, in particular on uniformity of the log scale
QE results as the centers of the balls of the cover \eqref{COVER} vary. We also
thank J. Jung for correcting some typos. 

\section{Proof of Proposition \ref{QERh}}

The purpose of this section is to prove the uniform lower bound of Proposition \ref{QERh}. We use the Rellich identity argument
of \cite{ctz} to extract a logarithmic scale QER (quantum ergodic restriction) result from the global ones of \cite{HeR} and \cite{H}. However, as mentioned above, a key
new issue is to obtain uniformity in the centers $\{x_k\}$ of the covering balls \eqref{COVER}. In principle, one would like to 
prove existence of a subsequence of eigenfunctions of density one for which one has  Liouville weak* limits with uniform remainders in all the balls $B(x_k, C \ell) \cap H$,
but this has not yet been established globally in \cite{HeR, H}. For the proof of Theorem \ref{theo1}, it is only
necessary to obtain the  uniform lower bounds in Proposition \ref{QERh}. 

The proof of Proposition  \ref{QERh}  is based on estimates of variances of restricted matrix elements with respect to logarithmic scale pseudo-differential operators
on $H$,  and then on extraction of density
one subsequences by feeding variance bounds into Chebyshev inequalities. 
The variance bounds are obtained by applying Rellich identities as in \cite{ctz} between variance sums on $M$ and on $H$.We then use the small scale global variance
bounds of \cite{HeR,H}. We begin by defining the variance sums on $M$ and on $H$,
and then review the results of \cite{HeR, H} before going on to the proof of
Proposition \ref{QERh}.

\subsection{Variance sums on $M$ and on $H$}

Given a semi-classical  symbol $a$ on $T^* M$ and its semi-classical Weyl quantization $Op_h(a)= a^w$ on $L^2(M)$, we define the variance sum on $M$ to be 
\begin{equation} \label{VARSUMS}\begin{array}{l}
V_2(\hbar, a): = 
\hbar^{d-1} \sum_{E_j \in [1, 1 + \hbar]}
\left| \langle Op_{\hbar}(a) u_j, u_j \rangle - \mu_L(a_0)
\right|^2\\ \\
:= \frac{1}{N(\lambda)} \sum_{j: \sqrt{\lambda} \in [\lambda, \lambda + 1]} 
\left| \langle Op(a) u_j, u_j \rangle - \mu_L(a_0) \right|^2 \end{array}
\end{equation}
where $d\mu_L$ is normalized Liouville measure and $a_0$ is the principal symbol of $a$. For background on semi-classical symbols and pseudo-differential operators
we refer to \cite{ctz,Zw}.

The restricted variance sums on $H$ have a somewhat different from from \eqref{VARSUMS}. First, in place of the Liouville integral of $a$ one has the restricted state,
\begin{equation} \label{HSATE} \omega_H(a):  =  \int_{B^* H} a(y', \xi')  (1 - |\xi'|^2)^{\half} d y' d \xi'. \end{equation} 
The notation $\omega_H$ is adopted from \cite{HZ,ctz} and we refer there for further discussion.
Secondly, the restriction QER analogue of the matrix element $\langle Op_h(a) u_j, u_j \rangle$ is the matrix element of the Cauchy data $$ CD(u_h |_{H})= (u_h |_H, h d_h u_h |_H)$$ of $u$
on $H$. 
with respect to a semi-classical pseudo-differential operator $Op_h(a)$ on $L^2(H)$:
\begin{equation} \label{CDME}  \begin{array}{lll}  \lll Op_h(a) CD(u_h |_{H}) , CD (u_h |_H) \rrr_{L^2(H)}&:= &
\lll Op_h(a) h \partial_{\nu}  u_h |_{H} , h \partial_{\nu} u_h |_H \rrr_{L^2(H)}\\ &&\\ && + \lll Op_h(a) (1
+ h^2 \Delta_H) u_h |_{H}, u_h |_{H} \rrr_{L^2(H)}. \end{array} \end{equation}
Here, $\Delta_H$ is the Laplacian on $H$ for the metric induced by $g$.
We therefore define restricted variance sums by
\begin{equation} \label{HVARSUMS} \begin{array}{l}
V_{2, H} (\hbar, a): = \hbar^{d-1} \sum_{E_j \in [1, 1 + \hbar]}
\left| \lll Op_h(a) CD(u_h |_{H}) , CD (u_h |_H) \rrr_{L^2(H)}  - \omega_H(a) 
\right|^2\\ \\
:= \frac{1}{N(\lambda)} \sum_{j: \sqrt{\lambda} \in [\lambda, \lambda + 1]} 
\left| \langle Op(a) CD(u_j), CD(u_j )\rangle - \mu_L(a_0) \right|^2 \end{array}
\end{equation}
Here we use the  two different notations that are employed in \cite{JZ,ctz}.

We introduce Fermi normal coordinates $(x_d, x')$ around $H$, in which $H$ is defined by $x_d = 0$.
For a given $k$, we  center coordinates $x_k'$
at the centers $x_k$ of the cover \eqref{COVER}. We then
 consider restricted symbols of the form $f_k(\ell^{-1} x_k')$ where $f_k$ is obtained by transplanting to $B(x_k, C \ell)$ a  fixed  $C^{\infty}$ cutoff function on $\R^d$ which equals $1$ on the ball $B(0,1)$ of radius $1$ and
zero on the complement of twice the ball. That is, $f_k(\ell^{-1} x_k') $ is the pullback of $f_k$ under the chart $x_k'$. We often drop the subscript $k$ on the chart.  As in 
\cite{ctz} (see the discussion around \eqref{ADEF}) we convert the multiplication
operator on $H$ defined by  $f_k(\ell^{-1} x_k') $ 
to  an associated pseudo-differential operator on $M$ given by
\begin{equation} \label{ADEFa}  A_k(x',x^d,h D_x) = \chi ( \frac{x^d}{\epsilon}) \,h
D_{x^{d}}  f_k(\ell^{-1} x'),\end{equation}
where $\chi $ is a $C_0^{\infty}$  cutoff equal to $1$ near $0$, and $\epsilon$ is 
a parameter to be chosen later.

The following Proposition asserts that the restricted Cauchy data matrix elements
\eqref{HVARSUMS}  with respect to $f_k(\ell^{-1} x')$ on $H$ are asymptotic to the globalized matrix elements
on $M$, with a certain dependence on the parameters $\ell, h$:

\begin{prop} \label{RLEMintro} Let $\dim M = d$ and let $H$ be a hypersurface. Let $\{x_k\}$ denote the centers of the cover \eqref{COVER} and let $f = f_k \in C_0^{\infty}(B(x_k, 2 C \ell)$  be defined as above. Let $V_{2}(\hbar, f_k)$ be the
restricted variance sum \eqref{HVARSUMS}, let $A$ be as in \eqref{ADEFa} and let $V_{2, \hbar}(A)$ be as in \eqref{VARSUMS}.
Then,
$$V_{2, H}(\hbar, f_k) = V_{2,\hbar}( A(x',x^d,h D_x)) + {\mathcal
  O} ( \ell^{-2} \hbar) + {\mathcal O} ( \epsilon^{-2} \hbar) + 
   {\mathcal O} ( \epsilon^{-1} \ell^{-1} \hbar),  \;\; \hbar \to 0$$
uniformly in $k$.
\end{prop}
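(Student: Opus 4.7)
The plan is to adapt the Rellich identity computation of \cite{ctz} to the present logarithmically small scales, producing an eigenfunction-by-eigenfunction comparison between the restricted Cauchy data matrix element on $H$ and the matrix element of $A_k$ on $M$, and then to convert that pointwise comparison to the variance-sum identity by the elementary inequality $|X^2 - Y^2| \leq |X-Y|(|X|+|Y|)$ together with the uniform $O(1)$ bound on individual matrix elements.

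Concretely, I would introduce Fermi coordinates $(x^d, x')$ in a tubular neighborhood of $H$ centered at $x_k$, write the semi-classical Laplacian as $P_h := -h^2 \Delta = (hD_{x^d})^2 + R_H(x^d, x', hD_{x'})$ modulo $h \cdot (\text{lower order})$, and invoke the standard fact that, for each eigenfunction $u_j$ with $(P_h - E_j) u_j = 0$ and $E_j \in [1, 1+h]$, one has $\langle [P_h, A_k] u_j, u_j\rangle = 0$. Expanding the commutator with $A_k = \chi(x^d/\epsilon)\, hD_{x^d}\, f_k(\ell^{-1} x')$ and integrating by parts in $x^d$ as in \cite{ctz} yields a pointwise identity
\[
\langle \Op_h(f_k) \operatorname{CD}(u_j), \operatorname{CD}(u_j)\rangle_{L^2(H)} = \langle A_k u_j, u_j \rangle_{L^2(M)} + r_{j,k}(h, \epsilon, \ell),
\]
where the remainder $r_{j,k}$ collects contributions of second-order commutators in which derivatives fall either on $\chi(x^d/\epsilon)$, producing $\epsilon^{-1}$ factors, or on $f_k(\ell^{-1} x')$, producing $\ell^{-1}$ factors. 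Routine symbol bookkeeping gives $r_{j,k} = \mathcal{O}(h \epsilon^{-2}) + \mathcal{O}(h \ell^{-2}) + \mathcal{O}(h \epsilon^{-1} \ell^{-1})$, uniformly in $j$ and in $k$, the $k$-uniformity being automatic because the $f_k$ are translates of a single fixed profile.

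I would then verify that the two normalisations match up to the same-order error, i.e.\ that $\omega_H(f_k) = \mu_L(a_{0,k}) + \mathcal{O}(h\epsilon^{-1})$, where $a_{0,k}$ is the principal symbol of $A_k$. The calculation reduces to integrating $\chi(x^d/\epsilon)\, \xi_d\, f_k(\ell^{-1}x')$ first in $\xi_d$ over $\{|\xi|=1\}$ and then in $x^d$, which reproduces exactly the weight $(1-|\xi'|^2)^{1/2}$ appearing in \eqref{HSATE}, as already done in \cite{ctz}. Inserting the pointwise identity into the two variance sums \eqref{HVARSUMS} and \eqref{VARSUMS}, using the uniform bound $|\langle A_k u_j, u_j\rangle|,\, |\langle\Op_h(f_k)\operatorname{CD}(u_j),\operatorname{CD}(u_j)\rangle| \leq C\|f\|_\infty$, and averaging over $E_j \in [1, 1+h]$, produces the stated variance error of the same form $\mathcal{O}(h\ell^{-2}) + \mathcal{O}(h\epsilon^{-2}) + \mathcal{O}(h\epsilon^{-1}\ell^{-1})$.

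The main technical point I expect to have to watch is that the pseudodifferential calculus on both $M$ and $H$ remains valid for the $\ell$-dilated cutoff $f_k(\ell^{-1}x')$; this is standard since $\ell^{-1} = |\log h|^K$ is only logarithmic in $h^{-1}$, so the symbols lie in every class $S_\delta$ with $\delta > 0$, all commutator expansions close with explicit remainders, and the seminorms depend only on the fixed parent profile, which delivers the uniformity in $k$ for free. No genuinely new ideas beyond those of \cite{ctz} should be required; the only novelty is the careful bookkeeping of constants in the shrinking parameters $\ell$ and $\epsilon$.
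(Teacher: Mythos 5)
Your overall plan --- use the Rellich/Green identity from \cite{ctz} to convert the restricted Cauchy data matrix element into a global matrix element, track the $\epsilon^{-1}$ and $\ell^{-1}$ factors coming from derivatives of the cutoffs, and then compare variance sums --- is the right one and is the approach the paper actually takes. But the pointwise identity you write down is not correct, and the mistake is substantive rather than notational.

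You invoke the fact that $\langle [P_h, A_k] u_j, u_j\rangle_{L^2(M)} = 0$, expand the commutator, and claim this yields
\[
\langle \Op_h(f_k) \operatorname{CD}(u_j), \operatorname{CD}(u_j)\rangle_{L^2(H)} = \langle A_k u_j, u_j \rangle_{L^2(M)} + r_{j,k}.
\]
There are two problems. First, integrating $\langle [P_h, A_k] u_j, u_j\rangle$ by parts over the closed manifold $M$ produces no boundary terms at all, so you obtain the tautology $0=0$; the Rellich identity is Green's formula on the \emph{half-manifold} $M_+$ with $\partial M_+ = H$, and the boundary terms on $H$ are exactly the Cauchy data pairing. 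Second, the correct Rellich identity reads
\[
\langle \Op_h(f_k) \operatorname{CD}(u_j), \operatorname{CD}(u_j)\rangle_{L^2(H)} \;=\; \tfrac{i}{h}\bigl\langle [\,-h^2\Delta_g,\, A_k\,] u_j,\, u_j\bigr\rangle_{L^2(M_+)} + r_{j,k},
\]
with the \emph{commutator} on the right, not $A_k$ itself. This is not a cosmetic slip: the principal symbol of $A_k$ is $\chi(x^d/\epsilon)\,\xi^d\, f_k(\ell^{-1}x')$, which is odd in $\xi^d$ and hence has vanishing Liouville average, whereas the principal symbol of $\tfrac{i}{h}[-h^2\Delta_g, A_k]$ is the Poisson bracket $\{(\xi^d)^2 + R,\, \chi\,\xi^d f_k\}$, which is even in $\xi^d$ and whose integral over $S^*M_+$ reproduces the weight $(1-|\xi'|^2)^{1/2}$ of $\omega_H(f_k)$. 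As a consequence, your normalization step --- the claim that $\omega_H(f_k) = \mu_L(a_{0,k}) + \mathcal{O}(h\epsilon^{-1})$ with $a_{0,k}$ the principal symbol of $A_k$ --- is false: the left side is of order $\ell^{d-1}$ while $\mu_L(a_{0,k}) = 0$ by the odd symmetry. The correct comparison is between $\omega_H(f_k)$ and the Liouville integral of the Poisson-bracket symbol, and the discrepancy there (which the paper controls via the decomposition into terms $I$, $II$, $III$) is $O(\epsilon\,\ell^{d-1})$, not $O(h\epsilon^{-1})$.

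Your remainder bookkeeping --- two derivatives falling on $\chi$ give $\epsilon^{-2}$, two on $f_k$ give $\ell^{-2}$, one on each gives $\epsilon^{-1}\ell^{-1}$, all multiplied by a single $h$ --- is correct, as is your observation that $k$-uniformity is automatic because all $f_k$ are translates of a fixed profile and $\ell^{-1}$ grows only logarithmically in $h^{-1}$. Once you replace $\langle A_k u_j, u_j\rangle_{L^2(M)}$ by $\tfrac{i}{h}\langle[-h^2\Delta_g, A_k] u_j, u_j\rangle_{L^2(M_+)}$, redo the normalization comparison with the Poisson-bracket symbol, and derive the identity from Green's formula on $M_+$ rather than from the vanishing commutator on $M$, your argument becomes the paper's argument.
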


If one picks $\epsilon = \ell$ then the remainder is $\ocal (|\log h|^K h)$. 

We then apply the 
the  global small scale variance estimates on $M$ in  \cite{HeR, H}, which are recalled  
below  in  Proposition \ref{HRVAR} and  Proposition \ref{MAINHAN}. Their results
imply that
\begin{equation} \label{AVAR} V_{2,\hbar}( A(x',x_d,h D_x)) \leq  \frac{C}{|\log \lambda|^{(1 - K \beta)}}.\end{equation} Since the remainders in Proposition \ref{RLEMintro} 
are smaller than the right side of \eqref{AVAR}, we obtain

\begin{cor} \label{VARHEST} With the same notations and assumptions as in Proposition \ref{RLEMintro},
$$V_{2, H}(\hbar, f_k)  \leq \frac{C}{|\log \lambda|^{(1 - K \beta)}}.$$
\end{cor}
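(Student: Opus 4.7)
The plan is to deduce Corollary \ref{VARHEST} as an essentially mechanical combination of Proposition \ref{RLEMintro} with the global variance estimate \eqref{AVAR}, exploiting the fact that the semiclassical remainders in Proposition \ref{RLEMintro} decay polynomially in $\hbar$ while the main term only decays logarithmically.

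First I would apply Proposition \ref{RLEMintro} to each cutoff $f_k \in C_0^\infty(B(x_k, 2C\ell))$ associated with the $k$-th ball of the cover \eqref{COVER}, obtaining, uniformly in $k$,
\[ V_{2,H}(\hbar, f_k) = V_{2,\hbar}\bigl(A_k(x', x^d, hD_x)\bigr) + \ocal(\ell^{-2}\hbar) + \ocal(\epsilon^{-2}\hbar) + \ocal(\epsilon^{-1}\ell^{-1}\hbar), \]
where $A_k$ is the associated operator on $M$ built as in \eqref{ADEFa}. The free parameter $\epsilon$ governs the thickness in the normal variable $x^d$, while $\ell = (\log\lambda)^{-K}$ is fixed by the cover. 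The natural choice is $\epsilon = \ell$, which collapses the three error terms into a single bound of order $\ell^{-2}\hbar = (\log\lambda)^{2K}\hbar$.

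Next I would insert the global small-scale variance bound \eqref{AVAR} coming from Proposition \ref{HRVAR} and Proposition \ref{MAINHAN}. The operator $A_k$ has principal symbol supported in a tangential ball of radius $\ell$ times a normal interval of length $\epsilon = \ell$, so it lies within the class of logarithmically localized symbols to which the Hezari--Rivi\`ere and Han variance estimates apply. This yields
\[ V_{2,\hbar}\bigl(A_k(x', x^d, hD_x)\bigr) \leq \frac{C}{|\log\lambda|^{(1-K\beta)}}, \]
with a constant $C$ that is independent of the center $x_k$ (this uniformity is exactly the input carried through from \cite{HeR,H}, and it is the only point where one must track the dependence on $k$ carefully).

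Finally I would compare the sizes. The remainder $(\log\lambda)^{2K}\hbar = (\log\lambda)^{2K}\lambda^{-1/2}$ decays at a polynomial rate in $\lambda$, so for $\lambda$ large enough it is dwarfed by the purely logarithmic main term $|\log\lambda|^{-(1-K\beta)}$. Summing the two contributions and absorbing the polynomially small error into the constant gives
\[ V_{2,H}(\hbar, f_k) \leq \frac{C}{|\log\lambda|^{(1-K\beta)}}, \]
uniformly in $k$, as claimed. The only conceptual step requiring care is ensuring that the uniformity in $k$ is preserved in passing from the $M$-side variance to the $H$-side variance; this is precisely what Proposition \ref{RLEMintro} is designed to provide, so beyond that size comparison the argument is a direct substitution and no essential obstacle remains.
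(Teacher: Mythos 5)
Your proposal is correct and follows exactly the paper's own route: apply Proposition \ref{RLEMintro} with $\epsilon=\ell$, invoke the global variance bound \eqref{AVAR} from \cite{HeR,H} for $V_{2,\hbar}(A_k)$ (with uniformity in $k$), and absorb the polynomially small remainder $\ell^{-2}\hbar = (\log\lambda)^{2K}\lambda^{-1/2}$ into the logarithmic main term. Your observation that all three remainder terms collapse to $\ocal(\ell^{-2}\hbar)$ under the choice $\epsilon=\ell$ is in fact slightly more careful than the paper's remark, which misstates the collapsed remainder as $\ocal(|\log h|^{K}h)$ rather than $\ocal(|\log h|^{2K}h)$, though this makes no difference to the conclusion.
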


The Corollary implies Proposition \ref{QERh}.
The main application of  the result is to even resp. odd eigenfunctions of the negatively curved surface $(M, g)$ with  orientation
reversing involution $\sigma$.   For even eigenfunctions, the $\partial_{\nu} \phi_h$ term is zero, while for odd eigenfunctions
the $(1 + h^2 \Delta) \phi_h$ term is zero. Hence we have,

\begin{cor} \label{RLEMcor} Let $(M, J, \sigma, g)$ be a negatively curved surface as in Theorem \ref{theo1}.  Then for any $\beta > 0$,  the variances for the even eigenfunctions satisfy
\begin{equation}  \label{rellich6} \begin{array}{l}
\hbar^{d-1} \sum_{E_j \in [1, 1 + \hbar]}
\left|  \lll f_k(\ell^{-1} x_k') (1
+ h^2 \Delta_H) \phi_j |_{H}, \phi_j |_{H} \rrr_{L^2(H)}  - \omega_H(a) 
\right|^2  \leq \frac{C}{|\log \lambda_j|^{(1 - K \beta)}},
\end{array} \end{equation}
resp. the variances of the odd eigenfunctions satisfy
\begin{equation}  \label{rellich7} \begin{array}{l}
\hbar^{d-1} \sum_{E_j \in [1, 1 + \hbar]}
\left|   \lll f(\ell^{-1} x_k') h D_\nu \psi_j |_{H} , h D_\nu \psi_j |_H \rrr_{L^2(H)}  - \omega_H(a) 
\right|^2 \leq   \frac{C}{|\log \lambda_j|^{(1 - K \beta)}}
\end{array} \end{equation}
Both remainders are uniform in $k$.
\end{cor}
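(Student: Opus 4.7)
The plan is to derive Corollary \ref{RLEMcor} directly from Corollary \ref{VARHEST} by exploiting the Cauchy data decomposition \eqref{CDME} together with the symmetry of even/odd eigenfunctions across $H = \mathrm{Fix}(\sigma)$. Since $\sigma$ is an isometric involution fixing $H$ pointwise, its differential at each point of $H$ acts as $-1$ on the normal bundle and $+1$ on the tangent bundle. Consequently, if $\phi_j$ is an even eigenfunction then $\partial_\nu \phi_j \equiv 0$ along $H$, and if $\psi_j$ is an odd eigenfunction then $\psi_j \equiv 0$ along $H$. These are the two facts driving the whole argument.

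First I would write out the Cauchy data matrix element in \eqref{CDME} as the sum of the tangential (Dirichlet) term $\lll Op_h(a)(1+h^2\Delta_H)u_h|_H, u_h|_H\rrr$ and the normal (Neumann) term $\lll Op_h(a)\,h\partial_\nu u_h|_H, h\partial_\nu u_h|_H\rrr$. Taking $a = f_k(\ell^{-1} x_k')$ (identified with multiplication on $H$), one of these two terms drops out identically along the sequence of eigenfunctions: the Neumann term vanishes for $\{\phi_j\}$ and the Dirichlet term vanishes for $\{\psi_j\}$. Thus the Cauchy data matrix element reduces to a single summand in each case.

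Next I would apply Corollary \ref{VARHEST} to obtain the variance bound
\[
V_{2,H}(\hbar, f_k) \leq \frac{C}{|\log \lambda|^{(1-K\beta)}}
\]
uniformly in the center index $k$. Substituting the single surviving Cauchy data term into the definition \eqref{HVARSUMS} of $V_{2,H}$ yields exactly \eqref{rellich6} for the even eigenfunctions and \eqref{rellich7} for the odd eigenfunctions, with the same right-hand side and with the uniformity in $k$ inherited from Corollary \ref{VARHEST}.

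There is essentially no obstacle here beyond checking that the symbol class containing the cutoffs $f_k(\ell^{-1} x_k')$ lies in the setting where Proposition \ref{RLEMintro} and Corollary \ref{VARHEST} apply, and verifying that the comparison operator $A_k$ from \eqref{ADEFa} can be arranged consistently for even and odd eigenfunctions (the presence of the $hD_{x^d}$ factor couples to the Neumann part of the Cauchy data, which is why the even/odd dichotomy is reflected in which term of \eqref{CDME} survives). The only mildly delicate point is bookkeeping the uniformity in $k$ through the reduction; this is automatic because the cover \eqref{COVER} has bounded multiplicity and the constants in Proposition \ref{RLEMintro} are independent of the center $x_k$.
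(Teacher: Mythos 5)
Your proof is correct and follows essentially the same route as the paper: the paper, immediately before stating Corollary \ref{RLEMcor}, observes that for even eigenfunctions the $\partial_\nu\phi_h$ term of the Cauchy data vanishes on $H=\mathrm{Fix}(\sigma)$ and for odd eigenfunctions the $\psi_h|_H$ term vanishes, so Corollary \ref{VARHEST} specializes directly to \eqref{rellich6} and \eqref{rellich7}. Your additional remarks on how $d\sigma$ acts on the normal and tangent bundles of $H$ spell out why those vanishings hold, which the paper leaves implicit, but the underlying argument is the same.
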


In  \S \ref{UNIF} we use  Corollary \ref{RLEMcor}  to extract density one subsequences for which one has uniform QER lower bounds
as stated in Corollary \ref{BIGintro},
following the analogous results of \cite{HeR,H}.

\subsection{Review of QE on the logarithmic scale \cite{HeR, H}}

 In this section we   review the results of \cite{HeR, H}.
The first result is Proposition 2.1 of \cite{HeR}. Given $x_0 \in M$ and
$0 < \epsilon < \frac{inj(M,g)}{10})\footnote{$inj(M,g)$ denotes the injectivity radius}, $ define
$$\chi_{x_0, \epsilon} = \chi \left( \frac{||\exp_{x_0}^{-1}(x)||_{x_0}}{\epsilon} \right). $$ Let
$$V_{\lambda}(x_0, \epsilon): = \frac{1}{N(\lambda)} \sum_{j: \lambda_j \leq \lambda} \left| \int_M \chi_{x_0, \epsilon} |\psi_j|^2 dV_g - \int_M \chi_{x_0, \epsilon} \right|^2. $$

\begin{prop}\label{HRVAR}  If $(M, g)$ has negtive sectional curvature, $\beta > 0$ and
$0 < K < \frac{1}{2d}$\footnote{The $\frac{1}{2d}$ constraint on $K$ in the variance estimate is weaker than the constraint $\frac{1}{3d}$ in the uniform QE (and QER) theorems stated in \eqref{ell}. See \S \ref{UNIF}.}
,  then
$$V_{\lambda}(x_0,  (\log \lambda)^{-K}) \leq \frac{C}{|\log \lambda|^{(1 - K \beta)}}. $$
\end{prop}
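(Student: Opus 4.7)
The plan is to adapt the Anantharaman--Zelditch quantum variance argument to the logarithmically shrinking scale $\ell = (\log\lambda)^{-K}$, exploiting exponential mixing of the Anosov geodesic flow in negative curvature. Set $\hbar = \lambda^{-1/2}$ and replace the cutoff $\chi_{x_0,\ell}$ by a semi-classical Weyl quantization $A_\hbar = \mathrm{Op}_\hbar(a_\hbar)$ with symbol $a_\hbar(x,\xi) = \chi_{x_0,\ell}(x)\,\widetilde\chi(|\xi|_g)$, where $\widetilde\chi$ localizes near the energy surface $|\xi|_g = 1$. For $\psi_j$ with $\lambda_j \in [\lambda,\lambda+1]$ one has $\langle A_\hbar\psi_j,\psi_j\rangle = \int_M \chi_{x_0,\ell}|\psi_j|^2\,dV_g + O(\hbar^\infty)$, and the hypothesis $K < 1/(2d)$ keeps the symbol inside a workable calculus since $\|a_\hbar\|_{C^k} \lesssim \ell^{-k} = (\log\lambda)^{kK}$ grows only polylogarithmically.

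Next, with $U_t = e^{-it\sqrt{-\Delta}}$, introduce the time-averaged operator
$$\langle A_\hbar\rangle_T := \frac{1}{T}\int_0^T U_{-t}A_\hbar U_t\,dt, \qquad T = c|\log\hbar|,$$
with $c > 0$ smaller than an appropriate multiple of the reciprocal of the maximal Lyapunov exponent. Since eigenfunctions are $U_t$-invariant up to a phase, $\langle\langle A_\hbar\rangle_T\psi_j,\psi_j\rangle = \langle A_\hbar\psi_j,\psi_j\rangle$, so one may replace $A_\hbar$ by its time average inside the variance sum. A sub-Ehrenfest Egorov theorem then gives $\langle A_\hbar\rangle_T = \mathrm{Op}_\hbar(\overline a_T) + R_\hbar$ with $\overline a_T := T^{-1}\int_0^T a_\hbar \circ \phi_t\,dt$ and $R_\hbar$ of operator norm $\hbar\,\ell^{-N}$ for some $N = N(c)$, which is $o(1)$ under the standing constraint. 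Combining the Hilbert--Schmidt inequality $\sum_{\lambda_j \le \lambda}|\langle B\psi_j,\psi_j\rangle|^2 \le \|\Pi_\lambda B\Pi_\lambda\|_{HS}^2$ with the local Weyl law applied to $B = \mathrm{Op}_\hbar(\overline a_T - \mu_L(a_\hbar))$ then yields
$$V_\lambda(x_0,\ell) \le C\int_{S^*M}\bigl|\overline a_T - \mu_L(a_\hbar)\bigr|^2\,d\mu_L + o(1).$$

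The decisive input is exponential decay of correlations of the Anosov geodesic flow for Hölder data: for every $\alpha \in (0,1]$ there exist $C_\alpha,\gamma_\alpha > 0$ with
$$\left|\int_{S^*M}(f\circ\phi_t)\,g\,d\mu_L - \mu_L(f)\mu_L(g)\right| \le C_\alpha e^{-\gamma_\alpha t}\|f\|_{C^\alpha}\|g\|_{C^\alpha}.$$
Applied to $f = g = a_\hbar - \mu_L(a_\hbar)$, a standard double-integration argument gives $\int_{S^*M}|\overline a_T - \mu_L(a_\hbar)|^2\,d\mu_L \le C_\alpha T^{-1}\|a_\hbar\|_{C^\alpha}^2 \le C_\alpha\,\ell^{-2\alpha}/T$. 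Taking $T \asymp |\log\lambda|$ and relabelling $\beta := 2\alpha$ produces the claimed bound $V_\lambda(x_0,\ell) \le C_\beta|\log\lambda|^{-(1-K\beta)}$, with the Egorov remainder absorbed by the hypothesis $K < 1/(2d)$.

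The main obstacle is the last step: one must balance the polylogarithmic blow-up $\|a_\hbar\|_{C^\alpha}\lesssim \ell^{-\alpha}$ against the exponential mixing rate over a time $T \sim |\log\lambda|$, while simultaneously keeping the Egorov remainder $\hbar\,\ell^{-N}$ under control up to the sub-Ehrenfest time. The freedom to take $\alpha > 0$ arbitrarily small is exactly what makes $\beta > 0$ a free parameter in the conclusion; the constraint $K < 1/(2d)$ is the precise threshold that keeps all error terms subordinate to the main mixing term.
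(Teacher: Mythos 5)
The paper does not prove Proposition \ref{HRVAR}; it simply cites it as Proposition 2.1 of Hezari--Rivi\`ere \cite{HeR}. Your sketch reproduces the Hezari--Rivi\`ere argument essentially faithfully: microlocalize the ball cutoff, time-average up to a sub-Ehrenfest time $T\sim c|\log\hbar|$, apply Egorov plus a Hilbert--Schmidt/local Weyl bound to reduce the variance to $\int_{S^*M}|\overline a_T-\mu_L(a_\hbar)|^2\,d\mu_L$, and then invoke exponential decay of correlations for H\"older observables of the Anosov geodesic flow. The double-integration estimate $\int|\overline a_T-\mu_L(a)|^2\,d\mu_L\le C_\alpha T^{-1}\|a\|_{C^\alpha}^2$ combined with $\|a_\hbar\|_{C^\alpha}\lesssim \ell^{-\alpha}$ and the relabeling $\beta=2\alpha$ does produce $|\log\lambda|^{-(1-K\beta)}$, and the freedom to take $\alpha$ arbitrarily small is precisely why $\beta>0$ is a free parameter. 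This is the right argument.

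One point in your discussion is, however, misleading. You attribute the hypothesis $K<\tfrac{1}{2d}$ to keeping ``all error terms subordinate to the main mixing term,'' but that is not where the constraint lives. The Egorov remainder after time $T=c|\log\hbar|$ is of the form $\hbar^{\kappa}(\log\lambda)^{NK}$ for some $\kappa>0$ depending on $c$ and the Lyapunov exponents, and this is exponentially smaller (in $\log\lambda$) than the claimed bound $|\log\lambda|^{-(1-K\beta)}$ for \emph{any} fixed $K$; the polylogarithmic derivative loss $\ell^{-k}$ never competes with negative powers of $\hbar$. The variance estimate itself therefore holds for a wider range of $K$ than $\tfrac{1}{2d}$. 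The constraint $K<\tfrac{1}{2d}$ becomes relevant only downstream, when one compares the variance $|\log\lambda|^{-(1-K\beta)}$ to the square of the mean $\mu_L(a_\hbar)^2\asymp\ell^{2d}=|\log\lambda|^{-2dK}$ in the Chebyshev step (so that $1-K\beta>2dK$, i.e.\ $K<\tfrac{1}{2d+\beta}$), and the paper's footnote makes clear that the further sharpening to $K<\tfrac{1}{3d}$ is forced by taking a union over the $\sim\ell^{-d}$ balls of the cover. It would be cleaner to state your proof without invoking $K<\tfrac{1}{2d}$ at all, and note separately that this is the threshold at which the estimate has nontrivial consequences.
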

Note that the symbols $\chi_{x_0, \epsilon} $ are very special in this result,
particularly because they are independent of the $\xi$ variable and thus do not involve
dilations in $\xi$. 

In Section 3.1 of \cite{HeR}, the authors cover $M$ with balls $\{B(x_k, \epsilon)\}_{k =1}^{R(\epsilon)} $ of radius $\epsilon = |\log \lambda|^{-K}$,
$$M \subset \bigcup_{k = 1}^{R(\epsilon)} B(x_k, \epsilon). $$
 The cover has
the property that each point of $M$ is contained in $C_g$ mant of the double
balls $B(x_k, 2 \epsilon)$. The number of such balls satisfies the bounds,
$$c_1 \epsilon^{-d} \leq R(\epsilon) \leq C_2 \epsilon^{-d}. $$

The main QE result of \cite{HeR} gives uniform upper and lower bounds:

\begin{theo}\label{HeRres}With the same assumptions, given any orthonormal basis
of eigenfunctions, there exists a full density subsequence $\Lambda_K$ of
${\mathbb N}$ so that for $j \in \Lambda_K$, and for every
$1 \leq k \leq R(\epsilon), $
$$a_1 \epsilon^{d } \leq \int_{B(x_k, \epsilon)f \cap H} |\psi_j|^2 dS_g
\leq \int_{B(x_k, 50 \epsilon) \cap H} |\psi_j|^2 dS_g \leq a_2 \epsilon^{d },$$
where $a_1, a_2 > 0$ depend only on $\chi, g$. Here,  $\epsilon = |\log \lambda|^{-K}$,
where $K$ is constrained by \eqref{ell}.\end{theo}
A key point is the uniformity of the estimates in the centers $x_k$. We will go over
the argument in \S \ref{UNIF} when we adapt it to the QER setting.

\subsection{Review of X. Han \cite{H}}

X. Han proves a more general small scale QE theorem for semi-classical pseudo-differential
operators with $\delta(\hbar)$- microlocalized symbols, where $\delta(h)$  depends on the way symbols are dilated. 
Theorem 1. 5 of \cite{H} is the main result on small-scale quantum ergodicity. 
It applies to small scale pseudo-differential operators $Op(a)$ where $a$ satisfies symbol estimates of the form
$$\sup |D_{x}^{\alpha} D_{\xi}^{\beta} a| \leq C_{\alpha, \beta} \delta(h)^{- |\alpha| - |\beta|} \langle \xi \rangle^{-|\beta|}, $$
were $\delta(h) = |\log h|^{-\alpha}$ for $\alpha > 0$ satisfying the constraints in  \eqref{deltah}.

Define  the associated variance sums by
$$
V_2(\hbar, a)): = 
\hbar^{d-1} \sum_{E_j \in [1, 1 + \hbar]}
\left| \langle Op_{\hbar}(a) u_j, u_j \rangle - \mu_L(a_{(x_0, \xi_0)}^b)
\right|^2. $$
In Theorem 1.5, Han proves the

\begin{prop}\label{MAINHAN} Let $(M, g)$ be negatively curved. Then
$$V_2(h, a) = O(|\log h|^{-1 + \epsilon}), \;\; \forall \epsilon > 0. $$
\end{prop}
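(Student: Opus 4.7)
I would adapt the standard Schnirelman--Zelditch--Colin de Verdi\`ere variance argument to logarithmically dilated symbols, using exponential mixing of the geodesic flow in negative curvature as the quantitative input. The three ingredients are an Egorov theorem valid on an Ehrenfest-type time window, a Heisenberg time-average of the operator $A = Op_h(a) - \mu_L(a_{(x_0,\xi_0)}^b)$, and a quantitative ergodic theorem for $a\circ \Phi_t$ on $S^*M$.

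\textbf{Step 1 (Egorov on a log time window).} First I would verify that $a\circ \Phi_t$ stays in a usable symbol class for $|t|$ up to an Ehrenfest-type time $T_E(h)$. The dilated symbol satisfies $|\partial_x^\alpha \partial_\xi^\beta a|\leq C_{\alpha\beta}\,\delta(h)^{-|\alpha|-|\beta|}\langle\xi\rangle^{-|\beta|}$, and derivatives of $a\circ \Phi_t$ inflate by $e^{\Lambda |t|}$, with $\Lambda$ the maximal Lyapunov exponent of $g$. Requiring $\delta(h)^{-1} e^{\Lambda T}$ to stay below the symbol-class threshold $h^{-1/2}$ gives $T_E(h)\sim \frac{1-2\alpha}{2\Lambda}|\log h|$ (up to $\log\log$ corrections), and in this range one has an Egorov theorem of the form $U(-t)\,Op_h(a)\,U(t) = Op_h(a\circ \Phi_t) + O_{L^2}(h^{1-3\alpha-\epsilon})$.

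\textbf{Step 2 (time averaging and variance estimate).} Then I would average: set $\langle A\rangle_T := T^{-1}\int_0^T U(-t) A U(t)\,dt$. Since eigenfunctions diagonalize $U(t)$, one has $\langle A u_j, u_j\rangle = \langle \langle A\rangle_T u_j, u_j\rangle$, so Cauchy--Schwarz and the cyclicity of trace give $V_2(h,a)\leq C\,\|\langle A\rangle_T\|_{\mathrm{op}}^2 + (\text{negligible Hilbert--Schmidt correction})$, where the correction is $o(1)$ as long as $T = o(h^{-1})$.

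\textbf{Step 3 (mixing input).} Next I would bound $\|\langle A\rangle_T\|_{\mathrm{op}}$ via the symbol calculus: by Step 1 one writes $\langle A\rangle_T = Op_h(a_T) + (\text{Egorov error})$ with $a_T := T^{-1}\int_0^T a\circ\Phi_s\,ds - \mu_L(a_{(x_0,\xi_0)}^b)$. Exponential mixing of the Anosov geodesic flow in negative curvature then yields $\|a_T\|_{L^2(S^*M)}^2 = O(T^{-1})$ uniformly in the base point $(x_0,\xi_0)$, and a Calder\'on--Vaillancourt estimate adapted to the $\delta(h)$-class converts this into $\|Op_h(a_T)\|_{\mathrm{op}} = O(T^{-1/2})$ modulo polynomial losses in $\delta(h)^{-1}$.

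\textbf{Step 4 (assembly).} Finally I would choose $T = c|\log h|$ just below $T_E(h)$. The symbolic contribution gives $O(T^{-1}) = O(|\log h|^{-1})$; the Egorov remainder multiplied by $T$ gives $O(h^{1-3\alpha-\epsilon}|\log h|) = O(|\log h|^{-N})$ for any $N$, provided $\alpha < \frac{1}{3d}$. Summing over $E_j\in [1, 1+h]$ and normalizing by $N(h)\asymp h^{-(d-1)}$ produces the desired bound $V_2(h,a) = O(|\log h|^{-1+\epsilon})$.

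\textbf{Main obstacle.} The hard part will be Step 1: one needs to simultaneously (a) push the Egorov time all the way to $T\asymp |\log h|$ so that $a_T$ can reap the $O(T^{-1})$ mixing gain of Step 3, and (b) keep the Egorov remainder controlled despite the $\delta(h)^{-|\alpha|}$ blow-up of derivatives of the rough symbol. The constraint $\alpha<\frac{1}{3d}$ is precisely the window in which both can be done at once, and the polynomial losses one incurs while tracking rough symbols through the Weyl calculus are exactly what turn the clean $|\log h|^{-1}$ rate into $|\log h|^{-1+\epsilon}$ with arbitrarily small $\epsilon>0$.
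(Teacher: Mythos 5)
The paper does not actually prove Proposition \ref{MAINHAN}: it is quoted directly as Theorem 1.5 of X.~Han \cite{H} (with the closely related estimate in \cite{HeR}), and the present paper's contribution begins after this input. So there is no ``paper's own proof'' to compare against, only a citation. That said, your sketch is essentially a correct reconstruction of the proof template that Han and Hezari--Rivi\`ere use: an Egorov theorem for logarithmically dilated symbols valid up to an Ehrenfest-type time $T\sim c|\log h|$, time-averaging of $A=Op_h(a)-\mu_L(a)$, and exponential mixing of the Anosov geodesic flow giving $\|a_T\|_{L^2(S^*M)}^2 = O(T^{-1})$ uniformly in the base point.

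Two points to tighten. First, the passage from Step 2 to Step 3 is slightly misrouted: after Cauchy--Schwarz one does not bound the variance by the operator norm of $\langle A\rangle_T$ and then invoke Calder\'on--Vaillancourt. The argument of \cite{Zel,H,HeR} runs through the Hilbert--Schmidt norm: Cauchy--Schwarz gives
\[
V_2(h,a)\ \lesssim\ \frac{1}{N(h)}\sum_{E_j\in[1,1+h]}\|\langle A\rangle_T u_j\|^2
\]
which is $N(h)^{-1}$ times the trace of $\langle A\rangle_T^*\langle A\rangle_T$ on the spectral window, and this trace is identified with $\|a_T\|_{L^2(S^*M)}^2$ plus the Egorov remainder by a local Weyl law. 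Going through the operator norm as written would demand an $L^\infty$ bound on $a_T$, which exponential mixing does not directly provide (mixing is an $L^2$ correlation bound). Second, the constraint $\alpha<\frac{1}{3d}$ you invoke in Step 4 is stronger than needed for the variance estimate by itself: Han's Theorem 1.7 and Hezari--Rivi\`ere's Proposition 2.1 hold for $\alpha<\frac{1}{2d}$ (see the footnote to Proposition \ref{HRVAR} in the paper). The $\frac{1}{3d}$ threshold only enters later in the present paper, when a union bound is taken over the $O(|\log h|^{\alpha d})$ covering balls to extract a \emph{uniform} density-one subsequence, which costs an extra factor of the number of balls.
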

Han states the estimate as $O(|\log h|^{-\beta})$ for $\beta < 1$ when
$\alpha > 0$. When $\alpha = 0$, one can let $\beta = 1$ and then the 
result agrees with \cite{Zel} for non-dilated symbols.

The symbols we will be using have the form
\begin{equation} \label{abb} a^{b_1, b_2}_{x_0}(x, \xi; \hbar) = \delta(\hbar)^{-d} b_1(\frac{x - x_0)}{\delta(\hbar)}) b_2(x, \xi)
\chi(|\xi_g | -1), \end{equation}
where $x_0$ will be chosen to be a center $x_k$ of one of the balls of \eqref{COVER}, where  $b_1 =  f(\ell^{-1} x'), f'(\ell^{-1} x')$ resp. $\chi(\frac{x^d}{\epsilon}) $ (or its
derivative) and where  $b_2 = R_3, R_2, $ resp. $(\xi^d)^2$ is an unscaled classical symbol. See \S \ref{RELLICHSECT} for more details on the notation.
The `base symbol' $b$ will  be one of the
 symbols in terms $I, II, III$ defined below in  \eqref{COMM}. They  only involve rescaling
 in the $x$ variables, while the factors $R_3, R_2, (\xi^d)^2$ are classical un-scaled
 symbols. These are called $\delta(h)$-localized symbols in \cite{H}.
 Special cases have the form,
$$a^b_{x_0} (x, \xi; \hbar) = \delta(\hbar)^{-d} b(\frac{x - x_0)}{\delta(\hbar)})
\chi(|\xi_g | -1). $$
Theorem 1.7 of \cite{H} shows that for $\alpha < \frac{1}{2d}$, $\beta < 1 - 2 \alpha d$,  then 
$$V_2(\hbar, a_{x_0}^b)  = O_b(\delta(h)^{2d} |\log h|^{-\beta}) \;\; \hbar \to 0, $$
uniformly in $x_0$.

Han's uniform comparability result  (Corollary 1.9 of \cite{H})  is analogous to Theorem \ref{HeRres}.

\begin{theo} \label{HANCOR}

 Let $(M, g)$ be negatively curved and of dimension $d$.
Then for any orthonormal basis of eigenfunctions, there exists a density
one subsequence $u_{j_n}$ and a uniform constant $C > 0$  so that for all $x_k$
as in \eqref{COVER},
and with $r(\lambda_{j_n}))$ defined in \eqref{ell},
$$\int_{B(x_k, r(\lambda_{j_n))}} |u_{j_n}|^2  \geq C \; Vol(B(x_k, r_{\lambda_{j_n}})). $$
\end{theo}

There is a slight difference between the integral in Theorem \ref{HANCOR} and
the matrix elements above, namely that we are replacing a smooth symbol by
the characteristic function of a ball. This is by possible by the Portmanteau theorem
for weak* convergence, i.e. the statement that if a sequence $\nu_j \to \nu$ as continuous linear functionals on $C^0(X)$ then $\nu_j(E) \to \nu(E)$ for all
sets $E$ with $\nu(\partial E) = 0$. However in the use of this theorem, rates
of approach to the limit get destroyed, and as in Theorem \ref{HANCOR} one can conlude  an inequality rather than an asymptotic with a remainder.

\subsection{\label{RELLICHSECT} Rellich identity and proof of Proposition
\ref{RLEMintro}}

We now start the proof of Proposition \ref{RLEMintro} and of Proposition \ref{QERh}.
We begin by recalling the Rellich identity as in \cite{ctz}  (based ideas of \cite{GL,Bu})  to convert global QE statements into restricted QER statements. With no loss of generality, we assume  $H$ is a separating hypersurface, so that $H$ is 
the boundary $H = \partial M_+$ of a smooth open submanifold
 $M_+ \subset M$. Let $d S = dS_H$ denote the Riemannian surface
 measure on $H$.

As above, we let   $x =(x^1,...,x^{d-1},x_d)= (x',x^d) $ be Fermi normal coordinates in a small
tubular neighbourhood $H(\epsilon)$ of $H $ defined near a center
$x_k  $ of a ball in the cover \eqref{COVER}. To lighten the notation we drop 
the subscript in $x_k$. Thus, $x = \exp_{x
} x^d \nu_{x'}$ where $x'$ are coordinates on $H$ and $\nu_{x'}$ is the unit
 normal pointing to $M_+$. In these coordinates we can locally define a tubular
 neighborhood of $H$ by
$$H(\epsilon) := \{ (x',x^d) \in U \times {\mathbb R}, \, | x^{d} | < \epsilon \}.$$
Here $U \subset {\mathbb R}^{d-1}$ is a coordinate chart
containing $x_k \in H$ and $\epsilon >0$ is arbitrarily small but
for the moment, fixed. We let $\chi \in C^{\infty}_{0}({\mathbb
R})$ be a cutoff with $\chi(t) = 0$ for $|t| \geq 1$ and $\chi(t)
= 1 $ for $|t| \leq 1/2.$
  In terms of the normal coordinates,
   \begin{equation} \label{DELTA} -h^2\Delta_g = \frac{1}{g(x)} hD_{x^d} g(x) hD_{x^d}   + R(x^d,x',hD_{x'}) \end{equation}
where, $R$ is a second-order $h$-differential operator along $H$
with coefficients that depend on $x_{d}$, and $R(0,x', hD_{x'}) = -h^2 \Delta_H$
is the
induced tangential semiclassical Laplacian on $H$. The main result of this section is,

\begin{lem} \label{RLEM}Let $f \in C^{\infty}(H)$. Let $\epsilon >0$ and define  $\ell$ as in \eqref{ell}.  With the above Fermi normal coordinates around each center $x_k$
of the balls of \eqref{COVER},
\begin{equation}  \label{rellich4} \begin{array}{l}
  \lll f(\ell^{-1} x') h \partial_{\nu}  \phi_h |_{H} , h \partial_{\nu} u_h |_H \rrr_{L^2(H)} + \lll f(\ell^{-1} x') (1
+ h^2 \Delta_H) u_h |_{H}, u_h |_{H} \rrr_{L^2(H)}  \\ \\
 =   \left\langle  \left( \left\{
       (\xi^d)^{2} +
      R(x^d,x',\xi') , \, \chi(\frac{x^d}{\epsilon})  \xi^{d} f(\ell^{-1} x') \right\}  \right)^w u_{h}, \,\,
  u_{h} \right\rangle_{L^2(M_+)}\\ \\+ {\mathcal
  O} ( \ell^{-2} \hbar) + {\mathcal O} ( \epsilon^{-2} \hbar) + 
   {\mathcal O} ( \epsilon^{-1} \ell^{-1} \hbar), 

\end{array} \end{equation}
   where the remainders are uniform in $k$.
\end{lem}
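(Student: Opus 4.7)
The approach is a Rellich/Green's identity as in \cite{ctz}: relate the matrix element of the commutator $[-h^2\Delta_g, A]$ on the bulk $M_+$ to Cauchy data integrals on $H$, where $A$ is the operator \eqref{ADEFa}. I assume $H$ separates $M$ into $M_+\cup H\cup M_-$; otherwise a partition of unity localizes the situation. The plan is (i) to show that the boundary data produced by Green's formula precisely reproduces the left hand side of \eqref{rellich4} up to $O(h^2)$ remainders, and (ii) to identify the bulk commutator with the Weyl quantization of the Poisson bracket $\{p,a\}$ modulo terms whose size is governed by the logarithmically dilated derivatives of $a$.

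\emph{Boundary step.} Apply Green's formula on $M_+$, using $(-h^2\Delta_g-1)u_h=0$, to obtain
\[
\langle[-h^2\Delta_g, A]u_h, u_h\rangle_{L^2(M_+)} = -h^2\int_H\bigl[(\partial_\nu Au_h)\bar u_h - Au_h\,\overline{\partial_\nu u_h}\bigr]\,dS.
\]
In the Fermi coordinates of \eqref{DELTA}, since $\chi\equiv 1$ near $x^d=0$, the identities $\chi(0)=1$ and $\chi'(0)=0$ yield $Au_h|_H = f(\ell^{-1}x')\,hD_{x^d}u_h|_H$ and $\partial_\nu Au_h|_H = -ihf(\ell^{-1}x')\,\partial_{x^d}^2 u_h|_H$ with no $\epsilon^{-1}$ residue. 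The eigenvalue equation rewrites $(hD_{x^d})^2u_h|_H$ as $(1+h^2\Delta_H)u_h|_H + hE_1u_h|_H + hE_2(hD_\nu u_h|_H)$, where $E_1,E_2$ are classical operators on $H$ whose bounds depend only on $g, H$; combined with the standard restriction estimate $\|u_h|_H\|_{L^2(H)}+\|hD_\nu u_h|_H\|_{L^2(H)} = O(1)$, the boundary integral equals $-ih$ times the LHS of \eqref{rellich4} plus an $O(h^2)$ Rellich remainder uniform in $\ell, \epsilon$ and in $k$.

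\emph{Bulk step.} The Weyl symbol of $A$ is $a_h = \chi(x^d/\epsilon)\xi^d f(\ell^{-1}x') + h\,a_1$, with $a_1 = -\tfrac{i}{2}\epsilon^{-1}\chi'(x^d/\epsilon)f(\ell^{-1}x')$ coming from the non-commutation of $\chi(x^d/\epsilon)$ with $hD_{x^d}$; since $\chi\xi^d$ is linear in $\xi^d$ and $f$ is independent of $\xi$, the Moyal product $\chi\xi^d\#f$ has no further correction, so the principal part of $a_h$ is $\chi\xi^d f$ on the nose. The Moyal commutator expansion then reads
\[
\tfrac{1}{ih}[-h^2\Delta_g, A] = \{p,\chi\xi^d f\}^w + h\{p, a_1\}^w + h^2 B^w, \quad p = (\xi^d)^2+R(x^d,x',\xi'),
\]
where $\{p,a_1\}$ has symbol size $O(\epsilon^{-2})+O(\epsilon^{-1}\ell^{-1})$, and the Moyal remainder $B$ involves third-order mixed $(x,\xi)$-derivatives. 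Since $p$ is quadratic in $\xi$ and $\chi\xi^d f$ is linear in $\xi^d$ and independent of $\xi'$, the only surviving contribution in $B$ is of type $(\partial_\xi^2\partial_x p)(\partial_x^2\partial_{\xi^d}(\chi f))$, whose symbol is bounded by $\ell^{-2}+\epsilon^{-2}+\epsilon^{-1}\ell^{-1}$. Calder\'on-Vaillancourt then gives $\|hB^w\|_{L^2\to L^2} = O(h^2\ell^{-2}+h^2\epsilon^{-2}+h^2\epsilon^{-1}\ell^{-1})$, all absorbed into the claimed error terms $O(h\ell^{-2}) + O(h\epsilon^{-2}) + O(h\epsilon^{-1}\ell^{-1})$.

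Uniformity in $k$ is immediate because $f$ and $\chi$ are fixed templates merely translated to each center $x_k$, and every constant in Calder\'on-Vaillancourt, the restriction estimate, and the Fermi chart depends only on $(M,g,H)$. The main technical point is the joint bookkeeping: one must verify that no cross terms of size $h\ell^{-1}$ or $h\epsilon^{-1}$ — carrying one fewer scale factor than allowed — slip in at principal order. The two identities that rule them out are the vanishing of $\{\chi\xi^d, f\}$ in the bulk step (eliminating a would-be subprincipal Weyl correction with only one scale factor) and $\chi'(0)=0$ in the boundary step (eliminating the analogous boundary defect); once both are secured, every remaining error carries two scale factors, matching the statement.
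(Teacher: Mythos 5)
Your proof follows the same strategy as the paper: apply Green's formula to obtain the Rellich identity relating the boundary Cauchy-data pairing to the bulk commutator integral, exploit $\chi(0)=1$, $\chi'(0)=0$ and the eigenvalue equation to identify the boundary side with the left-hand side of \eqref{rellich4}, then analyze $\frac{i}{h}[-h^2\Delta_g, A]$ to identify the Poisson bracket as the leading symbol, with the remainder controlled by two derivatives of $\chi(\cdot/\epsilon)f(\ell^{-1}\cdot)$. The only cosmetic difference is that you phrase the bulk step via a Weyl/Moyal expansion (writing $A = a_0^w + h a_1^w$ and tracking the Moyal remainder $B$), whereas the paper simply notes that the commutator of two differential operators is $h$ times a degree-two polynomial in $hD_x$ whose sub-leading coefficients carry two derivatives of the cutoffs, directly giving $\mathcal{O}(h\ell^{-2})+\mathcal{O}(h\epsilon^{-2})+\mathcal{O}(h\epsilon^{-1}\ell^{-1})$; your Moyal bookkeeping in fact shows that the $h\ell^{-2}$ contribution is pushed to order $h^2$, which is consistent with (and slightly sharper than) the stated bound. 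Both routes are correct and equally elementary.
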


\begin{rem} Compare the statement to that of Proposition \ref{RLEMintro}. \end{rem}

\begin{proof}

Let $A(x, h D_x) \in \Psi_{sc}^{0}(M)$ be an order zero semiclassical pseudodifferential operator on $M$ (see \cite{ctz}). By  Green's formula  we get  the Rellich identity
\begin{align}
\label{rellich}
  \frac{i}{h} \int_{M_+}  & \left( [-h^2 \Delta_{g},  \, A(x, h D_{x}) ] \, u_{h}(x)  \right) \overline{u_{h}(x)} \,  dx \\
= & \int_H \left(h D_{\nu}  \, A(x',x^d,h D_x)
 u_{h}|_{H} \right)  \overline{u_{h}}|_{H}  \,  d S_{H} \notag \\
& +   \int_{H}      \left(\, A(x',x^d,h D_x) \,
  u_{h}|_{H} \right) \overline{h D_{x^d}
    u_{h}}|_{H}   \, dS_{H} .\notag
\end{align}
Here, $D_{x_j} = \frac{1}{i} \frac{\partial}{\partial x^j}$,   $D_{x'}=(D_{x^1},...,D_{x^{d-1}}),$ \,  $D_{x^d} |_H= \frac{1}{i} \partial_{\nu}$ where $\partial_{\nu}$
is the interior unit normal to $M_+$.

We introduce a small parameter $\epsilon$ and  choose
\begin{equation} \label{ADEF}  A(x',x_d,h D_x) = \chi ( \frac{x^d}{\epsilon}) \,h
D_{x^{d}}  f(\ell^{-1} x'). \end{equation}.
Since $A$ is a differential operator of order $1$ and $\Delta_g$ is a differential
operator, the commutator can be explicitly evaluated by elementary calculations. 
The use of pseudo-differential notation is only intended to streamline them.

Since $\chi(0)=1$ it follows that the second term on the right side  of
(\ref{rellich}) is just


\begin{equation} \label{rellich2}
 \lll f(\ell^{-1} x')D_{x^d} u_h |_H, h D_{x^d} u_h |_H \rrr. \end{equation}
The  first term on right hand side   of (\ref{rellich}) equals
\begin{align} \label{rellich3}
  \int_{H} &    h D_{x^d} (\chi(x^d/\epsilon)h D_{x^d} f(\ell^{-1} x') u_{h} )\Big|_{x^d=0}   \overline{ u_{h}}\Big|_{x_d=0}  \,
  dS_{H} \\
= & \int_H \Big( \chi(x^d/\epsilon)f(\ell^{-1} x')(h D_{x^d})^2 u_h +
\frac{h}{i \epsilon} \chi'(x_d/\epsilon)hD_{x^d} f(\ell^{-1} x') u_h
\Big)\Big|_{x^d = 0} \overline{u_h}\Big|_{x^d = 0} d S_H \notag \\
= & \int_H ( \chi(x^d/\epsilon) f(\ell^{-1} x') (1 - R(x^d, x', hD'))
u_h )\Big|_{x^d = 0} \overline{u_h}\Big|_{x^d = 0} d S_H \notag
,
\end{align}
since $\chi'(0) = 0$ and $((hD_{x^d})^2 + R + O(h)) u_h = u_h$ in these
coordinates. Thus, the left side of the stated formula follows from  (\ref{rellich})-(\ref{rellich3}).

It remains to compute  the integral 
$$  \frac{i}{h} \int_{M_+}   \left( [-h^2 \Delta_{g},  \, A(x, h D_{x}) ] \, u_{h}(x)  \right) \overline{u_{h}(x)} \,  dx $$on the left side of  \eqref{rellich}. The commutator 
is  $\hbar$ times a second degree polynomial in  $\hbar D_{x_j}$; thus, the extra factor cancels the factor of $\frac{1}{\hbar}$ outside the integral. 
Using \eqref{DELTA} and \eqref{ADEF}, its  principal symbol is given by
\begin{equation} \label{ps} \begin{array}{lll}\rm{p.s.} ( [-h^2 \Delta_{g},  \, A(x, h D_{x}) ] & = & \left\{
       (\xi^{d})^{2} +
      R(x^d,x',\xi') , \, \chi(\frac{x^d}{\epsilon})  \xi^{d} f(\ell^{-1} x') \right\} 
       \end{array} \end{equation}

The quantization of the principal symbol symbol is simply the naive one taking
$\xi_j \to \hbar D_{x_j}$ in the ordering specified by \eqref{DELTA}. 
The additional  non-principal terms are of one higher order in $\hbar$ but may involve two
derivatives of $\chi(\frac{x^d}{\epsilon})$, two derivatives of $ f(\ell^{-1} x')$
or a product of one mixed derivatives of these functions. This accounts
for the remainder and completes the proof of the Lemma.
\end{proof}

The fact that the remainders are uniform in $k$ is evident from the proof of the variance estimates, and is stated
explicitly in Proposition 2.1 of \cite{HeR} and Theorem 1.7 of  \cite{H}.


This completes the proof of Proposition \ref{RLEMintro}.
The conclusion is 
\begin{cor}\label{VARCORbb}

$$\begin{array}{l} V_{2, H}(\hbar, f_k) = V_{2,\hbar}\left( \hbar,
      \{ (\xi^d)^{2} +
      R(x^d,x',\xi') , \, \chi(\frac{x^d}{\epsilon})  \xi^{d} f(\ell^{-1} x') \}\right) ) \\ \\+ {\mathcal
  O} ( \ell^{-2} \hbar) + {\mathcal O} ( \epsilon^{-2} \hbar) + 
   {\mathcal O} ( \epsilon^{-1} \ell^{-1} \hbar),
 \;\; \hbar \to 0. \end{array}$$
 The remainders are uniform in $k$.

\end{cor}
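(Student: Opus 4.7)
The plan is to convert the pointwise identity of Lemma \ref{RLEM} into a variance identity by squaring, summing, and controlling cross terms. For each $u_h = u_j$ in the window $E_j \in [1,1+\hbar]$, Lemma \ref{RLEM} gives
\[
X_j(k) = Y_j(k) + r_j(k),
\]
where $X_j(k)$ denotes the Cauchy-data pairing entering \eqref{HVARSUMS}, $Y_j(k) = \lll \Op_h(\sigma_k) u_j, u_j \rrr_{L^2(M_+)}$ with $\sigma_k$ the Poisson bracket symbol in \eqref{ps}, and the remainder satisfies $r_j(k) = \ocal(\ell^{-2}\hbar) + \ocal(\epsilon^{-2}\hbar) + \ocal(\epsilon^{-1}\ell^{-1}\hbar)$, uniformly in $j$ and $k$.

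First, I would verify the matching of means $\omega_H(f_k) = \mu_L(\sigma_k) + \ocal(\epsilon^{-1}\ell^{-1}\hbar)$: this is the classical ($u_h \equiv 1$) version of the same Rellich identity. Integrating the Poisson bracket $\{(\xi^d)^2 + R(x^d,x',\xi'), \chi(x^d/\epsilon)\xi^d f(\ell^{-1}x')\}$ by parts in $(x^d, \xi^d)$ against Liouville measure on the unit cotangent bundle produces the boundary contribution at $x^d = 0$, which is precisely $\int_{B^*H} f(\ell^{-1}x')(1-|\xi'|^2)^{1/2} dx' d\xi' = \omega_H(f_k)$; the remaining terms (from $\chi'(x^d/\epsilon)$ and from tangential derivatives of $R$) are bounded by the same powers of $\ell^{-1}, \epsilon^{-1}$ appearing in $r_j(k)$. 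Uniformity in $k$ is automatic, since $f_k$ is the transplant of one fixed profile through the Fermi chart centered at $x_k$.

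Next, centering the identity and squaring,
\[
|X_j(k) - \omega_H(f_k)|^2 = |Y_j(k) - \mu_L(\sigma_k)|^2 + 2\,\Re\bigl\{(Y_j(k) - \mu_L(\sigma_k))\overline{\tilde r_j(k)}\bigr\} + |\tilde r_j(k)|^2,
\]
with $\tilde r_j$ of the same order as $r_j$. Multiplying by $\hbar^{d-1}$ and summing over $E_j \in [1,1+\hbar]$, the first term gives $V_{2,\hbar}(\sigma_k^w)$. The last term contributes $\ocal(\hbar^2(\ell^{-4} + \epsilon^{-4} + \epsilon^{-2}\ell^{-2}))$, which is strictly dominated by the claimed remainder $\ocal(\hbar(\ell^{-2} + \epsilon^{-2} + \epsilon^{-1}\ell^{-1}))$ since $\ell^{-1}, \epsilon^{-1}$ are at worst logarithmic. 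The cross term I handle by Cauchy--Schwarz: combined with Proposition \ref{MAINHAN}, which gives $V_{2,\hbar}(\sigma_k^w) = \ocal(|\log h|^{-1+\epsilon})$, the cross contribution is $\ocal(V_{2,\hbar}^{1/2} \cdot \|r(k)\|_\infty)$, also absorbed into the stated remainder.

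The main obstacle is ensuring uniformity in the cover index $k$ throughout: the semiclassical Weyl calculus constants for $\sigma_k$, the small-scale variance bound of Proposition \ref{MAINHAN} applied to $\sigma_k^w$, and the mean-matching computation must all have constants independent of the center $x_k$. This is inherited from the fact that, in its Fermi chart, $\sigma_k$ is a single model symbol on $\R^d$ rescaled by $\ell$; since the Fermi atlas can be chosen finite and the profile $f$ is fixed, all estimates reduce to those for that one model symbol on $\R^d$, and the constants are uniform in $k$.
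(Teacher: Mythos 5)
Your strategy is the same one the paper silently invokes: take the pointwise Rellich identity of Lemma \ref{RLEM}, center at the respective limit constants, square, sum over the spectral window, and control the cross term and the squared remainder; the paper simply declares ``The conclusion is'' and states the corollary, so you are supplying the bookkeeping it omits.  Your treatment of the squared remainder and of the cross term via Cauchy--Schwarz together with Proposition \ref{MAINHAN} is correct, and your observation about uniformity in $k$ via the fixed Fermi-chart profile is exactly the right point.

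Where I would push back is on the mean-matching step.  You assert $\omega_H(f_k) = \mu_L(\sigma_k) + \mathcal{O}(\epsilon^{-1}\ell^{-1}\hbar)$ and justify it by saying the ``remaining terms'' after integration by parts are bounded by the same powers of $\ell^{-1},\epsilon^{-1}$ that occur in $r_j(k)$.  That cannot be right as stated: $\mu_L(\sigma_k)$ and $\omega_H(f_k)$ are purely classical integrals with no $\hbar$ in them, so the discrepancy (whatever it is) is $\hbar$-independent.  A naive term-by-term reading of the decomposition $I+II+III$ in \eqref{COMM} would suggest a discrepancy of size $\mathcal{O}(\epsilon\,\ell^{d-1})$ coming from terms $I$ and $III$; with $\epsilon=\ell$ this is $\mathcal{O}(\ell^{d})$, which is far larger than the remainders $\mathcal{O}(\ell^{-2}\hbar)$ in the corollary, and if that were the true size of the mismatch the stated corollary would fail after squaring.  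What actually saves the argument is that the mismatch is \emph{exactly} zero (up to a universal constant): since $\{p,a\} = X_p a$ and $X_p$ preserves Liouville measure on $\{p=1\}$, the divergence theorem applied on $S^*M_+$ identifies $\int_{S^*M_+}\{p,a\}\,d\mu_L$ with the flux of $a\,X_p$ through $S^*_H M$, and with $a = \chi(x^d/\epsilon)\,\xi^d f(\ell^{-1}x')$ and $\chi(0)=1$ this flux is precisely $\int_{B^*H} f(\ell^{-1}x')(1-|\xi'|^2)^{1/2}\,dx'\,d\xi' = \omega_H(f_k)$, with the $(1-|\xi'|^2)^{1/2}$ weight produced by the pushforward Jacobian from $S^*_HM$ to $B^*H$.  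The apparent $\mathcal{O}(\epsilon\,\ell^{d-1})$ contributions of $I$ and $III$ cancel against the $x^d$-dependent sub-leading corrections to $II$; they are not independently small.  I would replace the imprecise ``bounded by the same powers'' reasoning by this exact Stokes identity, both because the claimed error term has the wrong form and because the exactness is actually load-bearing for the corollary to hold at the stated accuracy.
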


\subsection{\label{I,II,III} Decomposition  of the gobal variance sums}

The variance sums on the right side of Corollary \ref{VARCORbb} can be simplified
and made more explicit,  because only one term in the Poisson
bracket  of \eqref{ps} dominates. To see this, we observe that
\begin{equation} \label{COMM} \begin{array}{lll}
\left\{
       (\xi^{d})^{2} +
      R(x^d,x',\xi') , \, \chi(\frac{x^d}{\epsilon})  \xi^d f(\ell^{-1} x') \right\} 
 & = &2 \{\xi^d,  \chi(\frac{x^d}{\epsilon}) \}  (\xi^d)^2 f(\ell^{-1} x') \\&&\\
      & +& \{R(x^d, x', \xi'), \xi^d\}  \chi(\frac{x^d}{\epsilon})  f(\ell^{-1} x') \\&&\\ & + &
       \{R(x^d, x', \xi'),  f(\ell^{-1} x')\} \xi^d \chi(\frac{x^d}{\epsilon})\\&&\\
 & = &   \chi(\frac{x^d}{\epsilon})  f(\ell^{-1} x')  R_3(x',x^d,\xi'),\\&&\\&+&  \frac{2}{\epsilon} \chi'(\frac{x_d}{\epsilon})
    (\xi^{d})^{2}  f(\ell^{-1} x') \\ && \\ && +   \ell^{-1} \chi(\frac{x^d}{\epsilon})  f'(\ell^{-1} x')  R_2(x',x^d,\xi')  \\ &&\\
    &= & I + II + III, \end{array}  \end{equation}
where $R_2, R_3$ are zero order symbols which are polynomial of degree $\leq 2$ in $\xi$. The new type of term not encountered
in \cite{ctz} is the second term $II$ in which one takes the Poisson bracket
$\{R(x^d, x', \xi'), f(\ell^{-1} x')\}$, where  $\xi'$ is paired with $x'$. 

 We introduce a further small parameter $\delta$ and   $\chi_2 (t) \in \Ci(\R)$ satisfying
$\chi_2 (t) = 0$ for $t \leq -1/2$, $\chi_2(t) = 1$ for $t \geq
0$, and $\chi_2'(t) >0$ for $-1/2 < t < 0$, and let $\rho$ be a
boundary defining function for $M_+$, i.e. $M_+ =\{\rho \geq 0\}$, $\rho = 0$
on $\partial M_+ = H$ and $d \rho \not= 0$ on $H$.  For instance one may take $\rho = x^d$.  By construction, 
$\chi_2(\rho/\delta) = 1$ on $M_+$ and $= 0$ outside a
$\delta/2$ neighbourhood of $H$ in $M_- = M \backslash M_+$. We choose
$\chi_2, \delta$ so that $\chi_2(\rho/\delta) \chi(x^d/\epsilon) = 1$.

Further, 
 $\chi'(x_n/\epsilon)|_{M_+} = \tchi'(x_n/\epsilon)$ for
a smooth function $\tchi \in \Ci(M)$ satisfying $\tchi = 1$
in a neighbourhood of $M \setminus M_+$ and zero inside
a neighbourhood of $H$. The purpose of the cutoff $\tchi$ is to express matrix elements on $M_+$ as matrix elements on $M$, a manifold
without boundary.

In summary, we now have four  small parameters: $\hbar, \ell, \epsilon, \delta$
 with $\hbar, \ell$ related by \eqref{ell} and cutoffs

\begin{itemize}

\item $\chi(\frac{x_d}{\epsilon})$, where 
 $\chi(t) = 0$ for $|t| \geq 1$ and $\chi(t)
= 1 $ for $|t| \leq 1/2.$
\bigskip

\item $\chi_2(\rho/\delta)$ ($\rho = x^d$), $\chi_2 (t) = 0$ for $t \leq -1/2$, $\chi_2(t) = 1$ for $t \geq
0$, and $\chi_2'(t) >0$ for $-1/2 < t < 0$, ; \bigskip

\item $\tchi (\frac{x^d}{\epsilon})$, $\tchi \in \Ci(M)$ satisfying $\tchi = 1$
in a neighbourhood of $M \setminus M_+$ and zero inside
a neighbourhood of $H$. ;

\end{itemize}
\bigskip

 The variance sums on the right side of Corollary \ref{VARCORbb} are bounded above by the sum of the three sub-sums involving $I, II, III$.
 Proposition \ref{MAINHAN}  applies to all of them. We are interested in the variance sums where $f_k$ is centered at the center
 $x_k$ of a ball in the cover \eqref{COVER}. In addition,  there are the  parameters $\epsilon, \delta$. 
 We introduce some new notation to emphasize this dependence.
 
 \subsubsection{Term I }

We fix $k$ and center the coordinates $x'$ at $x_k$ as discussed above. Then we let $b_1 = 
 \chi(x_d / \epsilon) f(\ell^{-1} x') $ where $x'$ is a local coordinate around $x_k$
 giving $x_k$ the value $0$ and where $b_2 =  R_3(x, \xi') \,).$

We then define (writing $x = (x', x^d)$ as above as Fermi coordinates centered at $x_k$)
$$\begin{array}{l} \rm{Var}_I(\hbar, x_k, \epsilon, \delta):  = 
V_2(\hbar,{\hbar}(\chi(x^d / \epsilon) f(\ell^{-1} x')   R_3(x, \xi')) \\ \\:= 
\hbar^{d-1} \sum_{E_j \in [1, 1 + \hbar]}\\ \\
\left| \langle  ( \,
 \chi(x^d / \epsilon) f(\ell^{-1} x')   R_3(x, \xi') \,)^w
 u_{h}, u_{h} \rangle_{L^{2}(M_+)}  - \int_{S^* M} \chi(x^d / \epsilon) f(\ell^{-1} x')   R_3(x, \xi') \,d \mu_L
\right|^2 \end{array} .$$ 

By the estimate of Proposition \ref{MAINHAN}, 
$$ \rm{Var}_I(\hbar, x_k, \epsilon, \delta): =  O_{b_1, f}( |\log h|^{-1 + \epsilon}).$$
Moreover,
$$\int_{S^* M} \chi(x^d / \epsilon) f(\ell^{-1} x')   R_3(x, \xi') \,d \mu_L
  = \ocal(\epsilon \ell^{d-1}). $$
  Due to the factor of $\epsilon$, these matrix elements will make a negligible
  contribution to the limit $h \to 0$.




\subsubsection{Term $II$}

We define
$$\begin{array}{l}\rm{Var}_{II} (\hbar, x_k, \epsilon, \delta): = 
 \hbar^{d-1} \sum_{E_j \in [1, 1 + \hbar]}\\ \\
\left|  \langle  ( \,
 \frac{2}{\epsilon} \chi'(\frac{x^d}{\epsilon})
     f(\ell^{-1} x')   (\xi^d)^2)^w
 u_{h}, u_{h} \rangle_{L^{2}(M_+)} -  \int_{S^* M}  \frac{2}{\epsilon} \chi'(\frac{x^d}{\epsilon})
     f(\ell^{-1} x')   (\xi^d)^2 \,)d \mu_L\right|^2.
\end{array}$$

Note that

$$\begin{array}{l}
   \left\langle    \left( \frac{1}{\epsilon}
     \chi'(\frac{x^d}{\epsilon}) \, (\xi^d)^2 f(\ell^{-1} x') \right)^w u_{h}, \,\,
   u_{h} \right\rangle_{L^2(M_+)} 
  =  \left\langle  \left( \frac{1}{\epsilon}
     \tchi'(\frac{x^d}{\epsilon}) \, (\xi^d)^2 f(\ell^{-1} x')\right)^w u_{h}, \,\,
   u_{h} \right\rangle_{L^2(M)} \notag \end{array}$$

Again by Proposition \ref{MAINHAN}, 
$$ \rm{Var}_I(\hbar, x_k, \epsilon, \delta): =  O_{b_1, f} |\log h|^{-1 + \epsilon}).$$
But  the presence
of $\frac{1}{\epsilon}$ in $     \frac{1}{\epsilon}
     \chi'(\frac{x^d}{\epsilon})$ ensures that the matrix elements in this variance
     sum have non-trivial limits. Indeed,
     \begin{equation} \label{IISUM} \begin{array}{lll}  \int_{S^* M}  \frac{2}{\epsilon} \chi'(\frac{x^d}{\epsilon})
     f(\ell^{-1} x' )  (\xi^d)^2\,d \mu_L &\simeq & \ell^{d -1}   \int_{B^*H}
     f(y')   (1 - |\xi'|^2)^{\half} \,dy' d \xi'.  \end{array} \end{equation}

\subsubsection{Term $III$}

We define
$$\begin{array}{l}  \rm{Var}_{III}(\hbar, x_k, \epsilon, \delta): = \hbar^{d-1} \sum_{E_j \in [1, 1 + \hbar]}\\ \\
\left|
\ell^{-1} \langle Op_h(\chi(\frac{x^d}{\epsilon})  f'(\ell^{-1} x')  R_2(x',x^d,\xi')) u_h, u_h \rangle_{M_+} 
-  \ell^{-1}  \int_{S^*M} \chi(\frac{x^d}{\epsilon})  f'(\ell^{-1} x')  R_2(x',x^d,\xi') d\mu_L\right|^2
\end{array}$$

Again by  Proposition \ref{MAINHAN}, 
$$ \rm{Var}_I(\hbar, x_k, \epsilon, \delta): =  O_{b_1, f}(|\log h|^{-1 + \epsilon}).$$

The matrix elements in the variance sum  $III$  apriori have non-zero limits  due to the factor $\ell^{-1}$ in $$\ell^{-1} \langle Op_h(\chi(\frac{x^d}{\epsilon})  f'(\ell^{-1} x')  R_2(x',x^d,\xi') )u_h, u_h \rangle_{M_+}. $$
The power of $\ell$ is one higher than in the other terms. However, the factor of $\epsilon$ can be chosen here (and consistently
elsewhere) to be $\ell$ and then the term balances the term $II$. Morever, $\int_{\R^{d-1}} f'(y')d y' = 0$, so the limit vanishes
and the matrix elements in  this term are of order $o(\ell^{d-1})$.

In summary,  the means have the following asymptotics:
$$\begin{pmatrix} **\\*\\*\end{pmatrix}$$

$$\label{rellia} \left\{ \begin{array}{l}  I: \;
\int_{S^* M} \chi(x^d / \epsilon) f(\ell^{-1} x')   R_3(x, \xi') \,d \mu_L 
 \simeq  \epsilon \ell^{d-1} \int_{S^* M} \chi(y^d) f(y')   R_3(\epsilon y^d, \ell y') \,d \mu_L
 \\ \\II: \; \int_{S^* M}  \frac{2}{\epsilon} \chi'(\frac{x^d}{\epsilon})
     f(\ell^{-1} x')   (\xi^d)^2 \,)d \mu_L \simeq 2 \ell^{d-1}  \int_{S^* M}\chi'(y^d)
     f(y')   (\xi^d)^2 \,d \mu_L
 \\ \\III: \; \ell^{-1}  \int_{S^*M} \chi(\frac{x^d}{\epsilon})  f'(\ell^{-1} x')  R_2(x',x^d,\xi') d\mu_L \simeq  \epsilon \ell^{d-1}
 \int_{S^*M} \chi(y^d)  f'(y')  R_2(0, 0,\xi') d\mu_L. \notag
\end{array} \right.$$

\subsection{\label{UNIF} Uniform lower bounds in the centers $x_k$ of the cover: Proof of Proposition \ref{QERh}}

To complete the proof of Proposition \ref{QERh}, we employ the diagonal argument of \cite{HeR} (Section 3.1) or \cite{H} (Proof of Corollary 1.9) to extract
a subsequence of density one for which the lower bound of Proposition \ref{QERh} is valid. The main point is that one is intersecting $|\log h|$ subsequences, and one
needs to use the rate of variance decay to construct  a subsequence of density
one satisfying all $|\log h|$ conditions.
Since the argument is given in  Section 3.1 of \cite{HeR} or in  Section 5.2 of \cite{H},  we only sketch it and explain the modifications necessary for the proof of Proposition \ref{QERh}. 

To clarify the logic of the final argument, we are applying the Chebychev inequality
to the three variance sums $I, II, III$ above.  We use it to extract a subsequence
of indices $j$
of density so that the $j$th summand tends to zero at a certain rate uniformly in $k$.
Since there are $(\log |h|)^K$ values of $k$,  the radii $r(\lambda_j)$
of the shrinking  balls must be slightly larger than would be the case for one fixed $k$. 
More precisely as in \eqref{ell}, $ r(\lambda_j) = (\log \lambda_j)^{- K} \;\rm{ where}\; 0 < K < \frac{1}{3d}$ as opposed to $K < \frac{1}{2d}$ for one fixed $k$. (Compare
Corollary 1.8 and Corollary 1.9 of \cite{H} or the discussion on page 3266).
 Once one has extracted the subsequence, one goes
back to the analysis of the means in $I, II, III$ to see that $II$ contributes the dominant 
asymptotics of the matrix elements. This determines the asymptotics of the
restricted matrix elements by Lemma \ref{RLEM}.

We work separately with the three variance sums $\rm{Var}_I, \rm{Var}_{II}, \rm{Var}_{III}$ of \S \ref{I,II,III}. Proposition \ref{MAINHAN}
applies to all of them, with remainders uniform in $x_k$. 

In the notation of \cite{H} (Step 2, p. 3283), the logarithmic dilation scale
is set as in \eqref{deltah} at $\delta(h) = (|\log h|)^{-\alpha} = r(\lambda_j)$ where $\alpha < \frac{1}{3d}$. Also fix $\beta > 0$. We consider a symbol $a$ of the form \eqref{abb}, or more precisely one of the symbols that arises in $I, II, III$ above, and  define the `exceptional sets'\footnote{The exceptional sets are denoted by $J_{k, K}(h)$ and the generic
sets are denoted $\Lambda_{k, K}$ in \cite{HeR}}
$$\Lambda_{x_k}^b(h): = \{j: E_j \in [1, 1 + h], \;\; |\langle Op_h^b(a_{x_k}^b) u_j, u_j \rangle - \mu_L(a_{x_k}^b) |^2
\geq \delta(h)^{2d} |\log h|^{- 2 \beta}\}. $$
\begin{rem}
In \cite{HeR} the exceptional set is defined by the condition,
$$\left| \int_M \chi_{x_k} |u_j|^2 dV_g - \int_M \chi_{x_k} dV_g \right| \geq |\log h|^{- K \beta}
\int_M \chi_{x_k} dV_g, $$
where $\beta > 0$. The definitions are equivalent because
$ \int_M \chi_{x_k} dV_g \simeq \delta(h)^d$.
\end{rem}

We then apply  Chebyshev's inequality $\rm{Prob}\{X \geq C\} \leq \frac{1}{C} {\bf E} X$  with $X = \langle Op_h^b(a_{x_k}^b) u_j, u_j \rangle - \mu_L(a_{x_k}^b) |^2$ respect to tormalized counting measure of
$E_j \in [1, 1 + h]$ and with $C =  \delta(h)^{2d} |\log h|^{- 2\beta}$. The variance estimate of Proposition \ref{MAINHAN}  says that the expected
 valued of $X$ is $O( |\log h|^{-1 + \epsilon})$.
 It follows that
$$\frac{\# \Lambda_{x_k}^b(h)}{N(h)} \leq (|\log h|)^{2 d \alpha} |\log h|^{2 \beta} |\log h|^{-1 + \epsilon}.$$

\begin{rem} In (7) of \cite{HeR}, with $p = 1$ the authors get $C |\log h|^{K(4 \beta + 2 d) -1}, $
which is equivalent since $\alpha $ of \cite{H}  is $K$ of \cite{HeR} and because $
\beta$ is an arbitrarily small number whose exact definition changes in each occurrence.
\end{rem}

The key point is to obtain uniform upper bounds as $x_k$ varies. Define
$$\Lambda^b(h) = \bigcup_{k = 1}^{N(h)} \Lambda^b_{x_k} (h), \;\; N(h) \leq C |\log h|^{\alpha d}. $$

We further define the `generic sets' 
$$\Gamma^b(h): = \{j: E_j \in [1, 1 + h]\} \backslash \Lambda^b(h). $$

Adding the Chebychev  upper bounds for the $|\log h|^{\alpha d}$ exceptional sets    gives 
$$\frac{\# \Lambda^b(h)}{N(h)} \leq(|\log h|)^{\alpha d} (|\log h|)^{2 d \alpha} |\log h|^{2 \beta} |\log h|^{-1 + \epsilon},$$ hence
$$\frac{\# \Gamma^b(h)}{\# \{j: E_j \in [1, 1+ h]\}}  \geq 1 - \frac{C}{|\log h|^{-  \alpha((4 \beta + 2 d)  + d)} |\log h|},$$
as stated in
(\cite{HeR}, above Lemma 3.1; \cite{H}, p. 3263), and with $|\log h| = \log \lambda$,
The  remainder tends to zero if $- \alpha  ( (4 \beta + 2d)  + d) + 1 > 0.$ Since $\beta$
is arbitrarily small, this requires  $- \alpha 3 d+ 1 > 0$ or $K < \frac{1}{3 d}$. In this case,
$$\frac{\# \Gamma^b(h)}{\# \{j: E_j \in [1, 1+ h]\}} \to 1, \;\;\; h \to 0, $$
thus giving a subsequence of density one.

If $j \in \Gamma^b(h)$ then
$$\left| \langle Op_h(a_{x_k}^b) u_j, u_j \rangle -\mu_L(a_{x_k}^b) \right| \leq C \delta(h)^d |\log h|^{-\beta}$$
uniformly for all $x_k$.

We now let $b$ be one of the symbols occurring in $I, II, III$ and denote by $\Gamma_{I}(h), \Gamma_{II}(h), $ resp. $\Gamma_{III}(h)$
the associated index sets. Recalling that $\delta(h) = (|\log h|)^{-\alpha} = r(\lambda_j)$ where $\alpha < \frac{1}{3d}$, and that  $\beta = 1 -\epsilon$ is a positive
number $< 1$ and arbitrarily close to 1 (cf. Theorem 1.5 of \cite{H}), we have
$$\begin{pmatrix} *\\*\\*\end{pmatrix}$$
\begin{equation}  \left\{ \begin{array}{l} (I)  \; j \in \Gamma_I(h)  \iff
\\  \left| \langle   \,
 (\chi(x^d / \epsilon) f(\ell^{-1} x')   R_3(x, \xi') \,)^w
 u_{h}, u_{h} \rangle_{L^{2}(M_+)}  - \int_{S^* M} \chi(x^d / \epsilon) f(\ell^{-1} x')   R_3(x, \xi') \,d \mu_L
\right| \\\ \ \leq  C  (|\log h|)^{- d \alpha} |\log h|^{-\beta},  \\ \\
(II) \; j \in \Gamma_{II}(h) \iff \\ 
\left|  \langle   \,
 \frac{2}{\epsilon} \chi'(\frac{x^d}{\epsilon})
     f(\ell^{-1} x')   (\xi_d^2)^w
 u_{h}, u_{h} \rangle_{L^{2}(M_+)} - \int_{S^* M}  \frac{2}{\epsilon} \chi'(\frac{x^d}{\epsilon})
     f(\ell^{-1} x')   (\xi^d)^2 \,)d \mu_L\right| \\\ \ \leq  C  (|\log h|)^{- d \alpha}  |\log h|^{-\beta},  
      \\ \\
(III) \; j \in \Gamma_{III}(h) \iff \\ 
\left|
\ell^{-1} \langle Op_h(\chi(\frac{x^d}{\epsilon})  f'(\ell^{-1} x')  R_2(x',x^d,\xi') u_h, u_h \rangle_{M_+} 
-  \ell^{-1}  \int_{S^*M} \chi(\frac{x^d}{\epsilon})  f'(\ell^{-1} x')  R_2(x',x^d,\xi')) d\mu_L\right| \\\ \ \leq  C  (|\log h|)^{-d\alpha}  |\log h|^{-\beta}
 \end{array}\right. .  \end{equation}
 
 All three conditions hold for indices in $\Gamma_I(h) \cap \Gamma_{II}(h) \cap \Gamma_{III}(h)$ and the estimates
 are uniform in $k$. Thus, there exists a subsequence of density one so that the above asymptotics and remainders
 are valid.

\subsection{Implications for restricted matrix elements}

We now prove:

 \begin{lem} I\label{RESTEST}  $j \in \Gamma_I(h) \cap \Gamma_{II}(h) \cap \Gamma_{III}(h),$
 then 
 \begin{equation}  \label{rellich8} \begin{array}{l} \left| \lll f(\ell^{-1} x') CD  \phi_h |_{H} , CD \phi_h |_H \rrr_{L^2(H)}    - \int_{B^*H} f_k(\ell^{-1} x') (1 - |\xi'|^2)^{\half} dx' d\xi'\right
 | \\ \\= O(\ell^d).
\end{array} \end{equation}
\end{lem}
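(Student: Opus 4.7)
The plan is to assemble the three ingredients already set up: the Rellich identity of Lemma \ref{RLEM}, the symbol expansion \eqref{COMM} into the three pieces $I$, $II$, $III$, and the uniform variance bounds that define the generic index sets $\Gamma_{I}(h),\Gamma_{II}(h),\Gamma_{III}(h)$. Throughout I take $\epsilon=\ell$, so that the three Rellich remainders $\ocal(\ell^{-2}h)+\ocal(\epsilon^{-2}h)+\ocal(\epsilon^{-1}\ell^{-1}h)$ collapse to $\ocal(\ell^{-2}h)$; since $|\log h|=\ell^{-1/K}$, this is $o(\ell^{N})$ for every $N$, well below the target $\ocal(\ell^{d})$.

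First I apply Lemma \ref{RLEM} to rewrite the Cauchy data inner product on the left of \eqref{rellich8} as
\[
\lll\bigl(\{(\xi^{d})^{2}+R(x^{d},x',\xi'),\,\chi(x^{d}/\epsilon)\xi^{d}f(\ell^{-1}x')\}\bigr)^{w}u_{h},u_{h}\rrr_{L^{2}(M_{+})}+\ocal(\ell^{-2}h).
\]
Using \eqref{COMM} I split this bulk matrix element into the three pieces $I+II+III$. For $j\in\Gamma_{I}(h)\cap\Gamma_{II}(h)\cap\Gamma_{III}(h)$, the uniform bounds established in \S\ref{UNIF} replace each of these three matrix elements by its Liouville mean, with an error of size $\ocal\bigl((|\log h|)^{-d\alpha}|\log h|^{-\beta}\bigr)=\ocal(\ell^{d}|\log h|^{-\beta})$, which is $o(\ell^{d})$ for any $\beta>0$.

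Next I evaluate the three Liouville means listed in \eqref{rellia}. Term $II$ produces the dominant contribution
\[
\int_{S^{*}M}\tfrac{2}{\epsilon}\chi'(x^{d}/\epsilon)f(\ell^{-1}x')(\xi^{d})^{2}\,d\mu_{L}=\int_{B^{*}H}f_{k}(\ell^{-1}x')(1-|\xi'|^{2})^{1/2}\,dx'\,d\xi'+\ocal(\ell^{d}),
\]
where the $\ocal(\ell^{d})$ absorbs the next-order correction when one reduces the normal-$\epsilon$-narrow Liouville integral to a surface integral on $B^{*}H$. The means of $I$ and $III$ are both $\ocal(\ell^{d})$: term $I$ picks up an extra factor of $\epsilon=\ell$ from $\chi(x^{d}/\epsilon)$, while in term $III$ the apparent $\ell^{-1}$ blow-up is killed by $\int_{\R^{d-1}}f'(y')\,dy'=0$, so a one-step Taylor expansion of the remaining factor produces a mean of order $\ell\cdot\ell^{d-1}$.

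Collecting everything, the restricted Cauchy data matrix element equals $\int_{B^{*}H}f_{k}(\ell^{-1}x')(1-|\xi'|^{2})^{1/2}\,dx'\,d\xi'$ up to a total error bounded by the sum of the Rellich remainder $\ocal(\ell^{-2}h)$, the three variance errors $\ocal(\ell^{d}|\log h|^{-\beta})$, and the $\ocal(\ell^{d})$ contributions from the Liouville means of $I$ and $III$; the last of these dominates and gives \eqref{rellich8}. The main obstacle is the bookkeeping of the parameters $h,\ell,\epsilon$: one must verify that $\epsilon=\ell$ simultaneously balances $II$ against $III$, keeps the Rellich remainders well below $\ell^{d}$, and does not inflate the lower-order means past $\ell^{d}$. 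The vanishing of $\int f'(y')\,dy'$ is what keeps the formally $\ell^{-1}$-singular term $III$ in check.
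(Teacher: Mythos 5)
Your proposal is correct and follows essentially the same route as the paper: apply Lemma \ref{RLEM} with $\epsilon=\ell$, split the resulting global commutator matrix element into the three pieces $I,II,III$ via \eqref{COMM}, use the $\Gamma$-set bounds from \S\ref{UNIF} to pass to Liouville means with error $O(\ell^{d}|\log h|^{-\beta})$, evaluate the means (with $II$ dominant, $I$ suppressed by the $\epsilon$ factor, and $III$ suppressed by $\int f'=0$ plus one Taylor step), and collect the $O(\ell^{d})$ total. Your bookkeeping of the parameters and your identification of $\int f'(y')\,dy'=0$ as the mechanism controlling the formally singular term $III$ both match the paper's argument.
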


\begin{proof} We use Lemma \ref{RLEM} to convert the restricted matrix
elements into global ones. We then set $\epsilon = \ell$. By Lemma \ref{RESTEST},
the remainders are of small order than the means, as we now verify by evaluating
the means asymptotically.
Throughout we use that  $\ell = |\log h|^{-K}$ with $K < \frac{1}{3d }$, hence the integrals are of order
$|\log h|^{- (d-1) K}  > |\log h|^{- \frac{(d-1)}{3d}}$, thus are  larger than 
the remainder.

We claim that the means have the following asymptotics:
$$\begin{pmatrix} **\\*\\*\end{pmatrix}$$

$$\label{rellia} \left\{ \begin{array}{l}  I: \;
\int_{S^* M} \chi(x^d / \epsilon) f(\ell^{-1} x')   R_3(x, \xi') \,d \mu_L 
 \simeq  \epsilon \ell^{d-1} \int_{S^* M} \chi(y^d) f(y')   R_3(\epsilon y^d, \ell y') \,d \mu_L
 \\ \\II: \; \int_{S^* M}  \frac{2}{\epsilon} \chi'(\frac{x^d}{\epsilon})
     f(\ell^{-1} x')   (\xi^d)^2 \,d \mu_L \simeq 2 \ell^{d-1}  \int_{S^* M}\chi'(y^d)
     f(y')   (\xi^d)^2\,d \mu_L
 \\ \\III: \; \ell^{-1}  \int_{S^*M} \chi(\frac{x^d}{\epsilon})  f'(\ell^{-1} x')  R_2(x',x^d,\xi') d\mu_L \simeq  \epsilon \ell^{d-1}
 \int_{S^*M} \chi(y^d)  f'(y')  R_2(0, 0,\xi') d\mu_L. \notag
\end{array} \right.$$
When we set $\epsilon = \ell$, we find that   the mean in $II$ has the leading
order, thus the restricted matrix element is asymptotic to the mean of $II$.

First we consider the sequence of matrix elments for  term $I$. The mean value of the matrix element is 
$$\int_{S^* M} \chi(x^d / \epsilon) f(\ell^{-1} x')   R_3(x, \xi') \,d \mu_L
  = \ocal(\epsilon \ell^{d-1}). $$
 We may (and will)  set $\epsilon = \ell$ and then the mean is $O(\ell^d)$, while
 the square root of the variance estimate in $\begin{pmatrix} *\\*\\*\end{pmatrix}$ is
 $ (|\log h|)^{- d \alpha} |\log h|^{-\beta} = O(\ell^d |\log h|^{-\beta})$, which is smaller. 
 It follows that with $\epsilon = \ell$,
 \begin{equation} \label{Ielld} \langle  ( \,
 \chi(x^d / \epsilon) f(\ell^{-1} x')   R_3(x, \xi') \,)^w
 u_{h}, u_{h} \rangle_{L^{2}(M_+)}  = O(\ell^d). \end{equation}
 




\subsubsection{Term $II$}

Note that
$$\begin{array}{l}
   \left\langle    \left( \frac{1}{\epsilon}
     \chi'(\frac{x^d}{\epsilon}) \, (\xi^d)^2 f(\ell^{-1} x') \right)^w u_{h}, \,\,
   u_{h} \right\rangle_{L^2(M_+)} 
  =  \left\langle  \left( \frac{1}{\epsilon}
     \tchi'(\frac{x^d}{\epsilon}) \, (\xi^d)^2 f(\ell^{-1} x')\right)^w u_{h}, \,\,
   u_{h} \right\rangle_{L^2(M)} \notag \end{array}$$

   Again with $\epsilon = \ell$, we get 
     \begin{equation} \label{IISUM} \begin{array}{lll}  \int_{S^* M}  \frac{2}{\epsilon} \chi'(\frac{x^d}{\epsilon})
     f(\ell^{-1} x'   (\xi^d)^2 \,d \mu_L &\simeq & \ell^{d -1}   \int_{B^*H}
     f(y')   (1 - |\xi'|^2)^{\half} \,dy' d \xi'.  \end{array} \end{equation}
     Since this is larger than the variance bound, we obtain

\begin{equation} \label{IIelld} 
   \left\langle    \left( \frac{1}{\epsilon}
     \chi'(\frac{x^d}{\epsilon}) \, (\xi^d)^2 f(\ell^{-1} x') \right)^w u_{h}, u_h \right \rangle  \simeq  \ell^{d -1}   \int_{B^*H}
     f(y')   (1 - |\xi'|^2)^{\half} \,dy' d \xi'. \end{equation}

\subsubsection{Term $III$}

Setting $\epsilon = \ell$ and changing variables shows that  $\int_{\R^{d-1}} f'(y')d y' = 0$, so the limit vanishes
and the matrix elements of term $II$ are of order $O(\ell^d)$.

Thus, the full matrix element on $H$ is asymptotic to the mean of term $II$ plus
a remainder of order $\ell^d$.

\end{proof}

Specializing to the special surfaces and applying the Rellich identies, we conclude:

\begin{cor} \label{BIG} Let $(M, J, \sigma, g)$ be a negatively curved surface with isometric
involution. Then
 If $j \in \Gamma_I(h) \cap \Gamma_{II}(h) \cap \Gamma_{III}(h),$
 and if $\phi_j$ is an orthonormal basis of even eigenfunctions, resp. $\{\psi_j\}$
 is an orthonormal basis of odd eigenfunctions, then  for the center  $x_k$ of every ball
 in \eqref{COVER},
 \begin{equation}  \label{rellich8} \begin{array}{l}  \lll f_k(\ell^{-1} x') (1 + h^2 \Delta_H)  \phi_h |_{H} ,  \phi_h |_H \rrr_{L^2(H)}    =   \int_{B^*H}  f_k(\ell^{-1} x') (1 - |\xi'|^2)^{\half} dx' d\xi' 
 | +O(\ell^2).
 \\ \\
\lll f_k(\ell^{-1} x') h D_{x^d}  |_{H} ,  hD_{x^d} \psi_h |_H \rrr_{L^2(H)}    = \int_{B^*H}  f_k(\ell^{-1} x') (1 - |\xi'|^2)^{\half} dx' d\xi' +  O(\ell^2).
\end{array} \end{equation}
uniformly in $k$.

\end{cor}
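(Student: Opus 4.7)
The plan is to deduce Corollary \ref{BIG} directly from Lemma \ref{RESTEST} by using the action of $\sigma$ on the Cauchy data of $\sigma$-invariant or $\sigma$-antiinvariant eigenfunctions along $H = \mathrm{Fix}(\sigma)$. Since the previous lemma already bounds the full Cauchy data inner product, the only work is to kill one of the two terms in \eqref{CDME} using the parity.

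First I would record the key symmetry. The anti-holomorphic involution $\sigma$ fixes $H$ pointwise and reverses the transverse orientation of $H$ in $M$, so on $H$ its linearization acts as $+1$ on tangential directions and as $-1$ on the normal bundle. It follows that for any smooth function $u$ on $M$, the restriction $(\sigma^* u)|_H = u|_H$ while $(\sigma^* \partial_\nu u)|_H = -\partial_\nu u|_H$. Hence if $\phi_j$ is even then $\partial_\nu \phi_j$ is odd, forcing $\partial_\nu \phi_j |_H \equiv 0$; similarly if $\psi_j$ is odd then $\psi_j|_H \equiv 0$.

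Next I would substitute these vanishing relations into \eqref{CDME} with symbol $a = f_k(\ell^{-1} x')$. In the even case, the term $\lll Op_h(a) h\partial_\nu \phi_h|_H, h\partial_\nu \phi_h|_H \rrr_{L^2(H)}$ drops out, and the Cauchy data pairing collapses to $\lll f_k(\ell^{-1} x')(1 + h^2 \Delta_H)\phi_h|_H, \phi_h|_H \rrr_{L^2(H)}$. In the odd case, the $(1 + h^2 \Delta_H)$ term drops out, leaving $\lll f_k(\ell^{-1} x') h D_{x^d} \psi_h|_H, h D_{x^d} \psi_h|_H \rrr_{L^2(H)}$ (the identification of $h D_{x^d}|_H$ with $h \partial_\nu/i$ is harmless because the unit modulus factor disappears in the quadratic form). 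Thus for $j \in \Gamma_I(h) \cap \Gamma_{II}(h) \cap \Gamma_{III}(h)$, Lemma \ref{RESTEST} applies and asserts
\[
\left| \lll f_k(\ell^{-1} x') CD(u_h|_H), CD(u_h|_H) \rrr_{L^2(H)} - \int_{B^* H} f_k(\ell^{-1} x') (1 - |\xi'|^2)^{\half} \, dx' d\xi' \right| = O(\ell^d),
\]
with $d = 2$, so the remainder becomes $O(\ell^2)$. Combining with the parity reduction gives both formulas of the corollary.

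The only point that must be tracked carefully (rather than a genuine obstacle) is the uniformity of the $O(\ell^2)$ remainder in the index $k$. This uniformity is inherited directly from Lemma \ref{RESTEST}, which in turn relied on the uniform-in-$x_k$ variance estimates \cite{H} (Theorem 1.7) together with the Chebyshev extraction argument carried out in \S \ref{UNIF}. Since every ingredient in the chain was proved uniformly in the centers of the cover \eqref{COVER}, the two displayed asymptotics in Corollary \ref{BIG} hold with a uniform constant, completing the proof.
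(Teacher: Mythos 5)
Your proof is correct and matches the paper's intended argument: the paper leaves Corollary~\ref{BIG} as an immediate consequence of Lemma~\ref{RESTEST} after ``specializing to the special surfaces,'' and the specialization you spell out---that $\sigma$ fixes $H$ pointwise and reverses the normal, so even eigenfunctions have vanishing normal derivative on $H$ while odd ones vanish on $H$, killing one term of the Cauchy data pairing \eqref{CDME}---is exactly what the paper invokes (it states this parity reduction explicitly just after Corollary~\ref{VARHEST}). The $O(\ell^d)=O(\ell^2)$ remainder and its uniformity in $k$ are inherited directly from Lemma~\ref{RESTEST}, as you note.
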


Next we show that we can eliminate the $\Delta_H$ in $ (1 + h^2 \Delta_H) $ in
at the expense of getting a lower bound. 

\subsection{Completion of the proof of Proposition \ref{QERh}}

Let  $\mu_j$ be a  positive  microlocal lift  of 
 $u_j |_H$ to $B^* H$ in the 
sense that  $d\mu_j $ are positive measures and $\langle Op_h(a) u_j |_H, u_j |_H \rangle \simeq \int_{B^* H} a d\mu_j$\footnote{Sometimes referred to as a Wigner measure}.  In particular,
$$  \lll f_k(\ell^{-1} x') (1 + h^2 \Delta_H)  u_j |_{H} ,  u_j |_H \rrr_{L^2(H)} \simeq \int_{B^*H} (1 - |\xi'|^2) d\mu_j.$$

If $CD(u_j) |_H$ is quantum ergodic on the logarithmic scale at all
centers $x_k$ of \eqref{COVER}, then
then \begin{equation} \label{CDQER} \begin{array}{l}  \lll f_k(\ell^{-1} x') (1 + h^2 \Delta_H)  u_j |_{H} ,  u_j |_H \rrr_{L^2(H)}
+ \int_H f_k(\ell^{-1} x') |\lambda_j^{-\half}  \partial_{\nu} u_j|^2  dS_g
   \\ \\ \simeq   
\int_{B^*H}  f_k(\ell^{-1} x') (1 - |\xi'|^2) d\mu_j. \end{array} \end{equation}
 Since 
$$\begin{array}{lll} \int_H f_{k}(\ell^{-1} x') u_j^2 = \int_{B^* H}  f_{k} (\ell^{-1} x') d\mu_j &\geq &  
\int_{B^*H} f_k (1 - |\xi'|^2) d\mu_j \\ && \\ & \simeq & \lll f_k(\ell^{-1} x') (1 + h^2 \Delta_H)  u_h |_{H} ,  u_h |_H \rrr_{L^2(H)},
 \end{array} $$
 it follows from \eqref{CDQER} that for all $x_k$ and large enough $\lambda_j$,
 \begin{equation}  \label{rellich8} \begin{array}{l}  \int_H f_{k}(\ell^{-1} x') u_j^2 +  \int_H f_k(\ell^{-1} x') |\lambda_j^{-\half}  \partial_{\nu} u_j|^2  dS_g \\ \\\geq     \int_{B^*H} f_{k}(\ell^{-1} x') (1 - |\xi'|^2)^{\half} dx' d\xi' .\end{array} \end{equation}

Proposition \ref{QERh} is an immediate consequence of this inequality. Also Corollary \ref{BIGintro} is
an immediate consequence  of Corollary \ref{BIG} and this
inequality.

\begin{rem} As J. Toth also observed, 
$$\int_H f_k(\ell^{-1} x')| u_j |_H |^2  \geq \langle f_k (I + h^2 \Delta_H) u_j, u_j \rangle_H 
$$
since  $h^2 \Delta_H \leq 0. $ Thus we could also use that
$$\begin{array}{lll} \int_H f_{k}(\ell^{-1} x') u_j^2 = \int_{B^* H}  f_{k} (\ell^{-1} x') d\mu_j &\geq &   \lll f_k(\ell^{-1} x') (1 + h^2 \Delta_H)  u_j |_{H} ,  u_j |_H \rrr_{L^2(H)} \\ && \\ & \simeq & \int_{B^*H} f_k (1 - |\xi'|^2) d\mu_j .
 \end{array}$$ \end{rem}

 \section{\label{KSECT} Proof of Proposition \ref{KUZ}}

The proof is similar to that in \cite{JZ2} but we must keep track of the dependence
of the remainder estimate on $\ell$, i.e. on the number of derivatives of
$f_{\ell}$.

Let $H\Subset M$ be  a smooth $(n-1)$-dimensional orientable hypersurface, and define the Cauchy data of $u_j$ on $H$ as
$$\begin{cases}
\text{Dirichlet data}: & u_j^b= u_j|_H,\\
\text{Neumann data}: & u_j^b= \lambda_j^{-\half} D_{x^d} u_j.
\end{cases}$$

Let $f \in C^{\infty}(H)$  and consider 
$$N(\sqrt{\lambda},  f_k(\ell^{-1} x')) : = \sum_{j: \sqrt{\lambda}_j \leq \sqrt{\lambda}} |\int_H f(\ell^{-1} s) u_j^b |_H (s) dS(s)|^2 . $$

\begin{prop}\label{KUZLEM}
$$N(\sqrt{\lambda},  f_k(\ell^{-1} x')) = \lambda^{\half} \int_H f_{\ell}^2 dS +  O(\lambda^{-\half + \gamma}) \forall \gamma >0. $$
\end{prop}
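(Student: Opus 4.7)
The plan is to establish this restricted Kuznecov asymptotic by the wave-kernel / Tauberian method, with the main delicacy being the tracking of the localisation scale $\ell$ through the parametrix and stationary phase.

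First I would write
\[
N(\sqrt{\lambda}, f_\ell) = \iint_{H\times H} f_\ell(s)\,\overline{f_\ell(s')}\, E^b_{\le \sqrt\lambda}(s, s')\, dS(s)\, dS(s'),
\]
where $E^b_{\le \sqrt\lambda}(s, s') := \sum_{\sqrt{\lambda_j} \le \sqrt\lambda} u_j^b(s) \overline{u_j^b(s')}$ is the restricted Cauchy-data spectral function on $H$ (Dirichlet or Neumann as required). Smoothing in $\mu$ by convolution with a nonnegative $\rho$ satisfying $\hat\rho \in C_c^\infty(-T, T)$, $\hat\rho(0)=1$, I would form
\[
S_\rho(\mu) := \sum_j \bigl|\langle f_\ell, u_j^b\rangle_H\bigr|^2\, \hat\rho(\mu - \sqrt{\lambda_j}) = \int_{-T}^T \hat\rho(t)\, e^{-it\mu} K(t)\, dt,
\]
where $K(t) := \iint_{H\times H} f_\ell(s)\,\overline{f_\ell(s')}\, U^b(t; s, s')\, dS\, dS'$ and $U(t) = e^{it\sqrt{-\Delta_g}}$. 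A standard Tauberian theorem applied to the nondecreasing counting function then converts asymptotics for $S_\rho(\mu)$ back into asymptotics for the sharp spectral sum.

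The leading term comes from $t$ in a fixed neighbourhood of $0$: I would insert the Hadamard parametrix for $U(t)$, restrict its Schwartz kernel to $H \times H$, and apply stationary phase in the fibre variables $\xi$ and in $s - s'$. The stationary set is the diagonal $\{s = s'\}$, which contributes $\int_H f_\ell^2\, dS$ times the universal factor $\mu$ from the $\hat\rho$-integration against $e^{-it\mu}$. After Tauberian inversion this yields the main term $\sqrt\lambda \int_H f_\ell^2\, dS$, which is the classical Kuznecov calculation of \cite{z, HHHZ} applied to the family $f_\ell$. For $|t|$ bounded below I would extend the support of $\hat\rho$ to $[-c\log\lambda, c\log\lambda]$ and invoke B\'erard's log-time parametrix on the universal cover (exploiting exponential geodesic spread in negative curvature); combined with stationary phase on the resulting oscillatory integral, this produces polynomial decay in $\lambda$ of the required form $O(\lambda^{-\half + \gamma})$. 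Uniformity of the remainder in the centre $x_k$ is automatic, since the symbol bounds on $f_\ell = f(\ell^{-1}(s - x_k))$ are translation-invariant along $H$.

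The main technical obstacle is bookkeeping the $\ell$-dependence through the stationary phase. Each derivative of $f_\ell$ contributes a factor of $\ell^{-1} = (\log \lambda)^{K}$, so during the integrations by parts and stationary-phase expansions one accumulates polylogarithmic losses $(\log \lambda)^{NK}$ with $N$ a fixed integer depending only on $\dim M$. These are harmless because they are absorbed by $\lambda^\gamma$ for any $\gamma > 0$. Once Proposition \ref{KUZLEM} is in hand, the pointwise bound of Proposition \ref{KUZ} and its uniformity over the $R(\ell) \lesssim (\log\lambda)^{K}$ centres $x_k$ follows by a Chebyshev argument identical in spirit to Section \ref{UNIF}, where the small variance per centre is combined with a union bound over centres against the polynomial decay $\lambda^{-\half + \gamma}$.
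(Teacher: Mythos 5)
Your overall framework — expressing $N(\sqrt\lambda, f_\ell)$ through the restricted wave kernel, smoothing with a test function $\rho$ whose Fourier transform is compactly supported, using a short-time parametrix and stationary phase near the diagonal, tracking the $\ell^{-1}$ losses from differentiating $f_\ell$, and closing with a Tauberian theorem — coincides with the paper's proof of Proposition \ref{KUZLEM}.

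However, the step where you extend $\operatorname{supp}\hat\rho$ to $[-c\log\lambda,\,c\log\lambda]$ and invoke B\'erard's logarithmic-time parametrix is both unnecessary and, as stated, incorrect. The paper keeps $\hat\rho$ supported in a \emph{fixed} small interval around $t=0$: by the wave-front-set argument (equation \eqref{ep0}), the only singular time of $S_{f_\ell}(t)$ in a small window is $t=0$ (the other singular times come from $H$-orthogonal geodesics and lie a positive distance from $0$), so long-time contributions never arise. The polynomial remainder $O(\lambda^{-\half+\gamma})$ comes from the \emph{next term} in the short-time stationary phase expansion of the smoothed sum, which is $O(\lambda^{-\half}\,\|f_\ell\|_{C^6}) = O(\lambda^{-\half}\ell^{-6})$; since $\ell^{-6}=(\log\lambda)^{6K}=O(\lambda^\gamma)$ for any $\gamma>0$, this is absorbed. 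B\'erard's log-time analysis would only yield a logarithmic gain (replacing $O(1)$ by $O(1/\log\lambda)$ in a Weyl-type remainder), never a polynomial factor $\lambda^{-\half}$, so attributing the $O(\lambda^{-\half+\gamma})$ bound to that device is a genuine gap in the argument. The correct mechanism is entirely local-in-time, and once you drop the log-time extension your remaining steps (leading term from the diagonal critical manifold, $\ell$-bookkeeping, Tauberian inversion, Chebyshev over the $R(\ell)$ centres) agree with the paper.
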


This is proved by the standard Tauberian method. Let $E(t, x, y)$ denote  the kernel of $\cos t \sqrt{-\Delta}$. We further
denote by $E^b(t, q, q')$ the Dirichlet, resp. Neumann data of the wave kernel on $H$.  First we consider the
cosine transform of 
$$d N(\sqrt{\lambda},  f_k(\ell^{-1} x')): = \sum_{j } |\int_H f_k(\ell^{-1} s) u_j^b |_H (s)dS(s) |^2  \;\ \delta_{\sqrt{\lambda}_j}, $$
given by

\begin{equation}  \begin{array}{lll} S_{ f_k(\ell^{-1} x')}(t): & = & \int_{H} \int_{H}  E^b(t, q, q')  f_k(\ell^{-1} q) f_{k}(\ell^{-1}q') dS(q) dS(q')
\\ &&\\ & = &  \sum_{j} \cos t \sqrt{\lambda_j} \left|\int_{H} f_{k}(\ell^{-1} q)  u_j(q) dS(q) \right|^2 .
\end{array} \end{equation}
We then introduce a smooth cutoff  $\rho \in \scal(\R)$ with $\mbox{supp} \hat{\rho} \subset (-\epsilon, \epsilon)$,
where $\hat{\rho}$ is the Fourier transform of $\rho$, and consider
$$S_{ f_k(\ell^{-1} x')}(\sqrt{\lambda}, \rho) = \int_{\R} \hat{\rho} (t) \;S_{ f_k(\ell^{-1} x')}(t) e^{i t \sqrt{\lambda}} dt. $$

\begin{lem} \label{CONOR} If supp $\hat{\rho}$ is contained in a sufficiently small interval around $0$, with
$\hat{\rho} \equiv 1$ in a smaller interval, then for both Dirichle and Neumann data
$u_j^b$,

\begin{equation}\begin{array}{l}
S_{f_{\ell}}(\lambda, \rho) =  \frac{\pi}{2}\sum_j\rho(\sqrt{\lambda}- \sqrt{\lambda_j}) |\langle u_j^b,  f_k(\ell^{-1} x')\rangle|^2\\ \\
= || f_k(\ell^{-1} x')||_{L^2(H)}^2 + O(\lambda^{-\half + \gamma}), \;\; \forall \gamma > 0, 
\label{eq:BFSSb} \end{array}\end{equation} 

\end{lem}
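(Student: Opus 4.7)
The plan is a standard Tauberian/stationary-phase analysis applied to the wave kernel on $H$, with care taken to track the effect of the shrinking scale $\ell = |\log\lambda|^{-K}$. Taking $\rho$ even and splitting $\cos(t\sqrt{\lambda_j}) = \tfrac{1}{2}(e^{it\sqrt{\lambda_j}}+e^{-it\sqrt{\lambda_j}})$, Fourier inversion delivers the first equality of \eqref{eq:BFSSb}, the $\rho(\sqrt{\lambda}+\sqrt{\lambda_j})$ contribution being $O(\lambda^{-\infty})$ since $\rho\in\mathcal{S}$ and $\sqrt{\lambda}+\sqrt{\lambda_j}\to\infty$. What remains is to extract the asymptotic of $\int\hat\rho(t)\,e^{it\sqrt{\lambda}}\,S_{f_k(\ell^{-1}x')}(t)\,dt$ as $\lambda\to\infty$.

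For $\hat\rho$ supported in a sufficiently small neighborhood $(-\epsilon,\epsilon)$ of $0$ (smaller than the injectivity radius), I would substitute the Hadamard parametrix for $e^{-it\sqrt{-\Delta}}$ and take Cauchy data on $H$. In Fermi normal coordinates $(x',x^d)$ around a center $x_k$ this yields the Fourier integral representation
\[
E^b(t,q,q') = (2\pi)^{-d}\int e^{i\Phi(t,q,q',\xi)}\,a^b(t,q,q',\xi)\,d\xi + R_N(t,q,q'),
\]
where the phase in flat coordinates reduces to $\langle q-q',\xi\rangle - t|\xi|$, the classical amplitude $a^b$ is determined by the Dirichlet/Neumann weight, and the remainder $R_N$ can be made arbitrarily smooth. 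Rescaling $\xi=\sqrt{\lambda}\eta$ casts the resulting triple integral as an oscillatory integral with large parameter $\sqrt{\lambda}$ and phase $\langle q-q',\eta\rangle - t|\eta|+t$.

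Stationary phase in $(t,q,q',\eta)$ pins the critical set to $t=0$, $q=q'$, $|\eta|=1$ with $\eta=\pm\nu$ normal to $H$: for $q\neq q'$ the condition $q-q'=t\eta$ would require $t\eta$ to be tangent to $H$, impossible when $\eta$ is normal, while nontrivial returning geodesic segments are excluded by the smallness of $\supp\hat\rho$. Evaluating the principal symbol at this diagonal critical set is the standard computation underlying Weyl laws on hypersurfaces, and recovers the leading term $\|f_k(\ell^{-1}x')\|_{L^2(H)}^2$. The stationary-phase expansion then produces subleading terms of size $\lambda^{-k/2}$ times finitely many derivatives of $f_k(\ell^{-1}x')^2$ and $a^b$.

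The main technical point is controlling the $\ell$-dependence of these subleading terms. Each derivative of $f_k(\ell^{-1}x')$ contributes a factor of $\ell^{-1}=|\log\lambda|^K$, so the $k$-th correction is bounded by $\lambda^{-k/2}|\log\lambda|^{2kK}$. Already for $k=1$ this is $O(\lambda^{-1/2+\gamma})$ for any $\gamma>0$, and higher $k$ give strictly smaller terms; choosing the parametrix order $N$ large enough makes the truncation error negligible as well. This polylogarithmic absorption is the reason the logarithmic scale of $\ell$ is compatible with the bound $O(\lambda^{-1/2+\gamma})$ in the statement, and it is the main obstacle that has to be handled cleanly.
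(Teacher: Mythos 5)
Your argument is essentially the same as the paper's. Both proofs isolate the $t=0$ singularity via a small-support cutoff $\hat\rho$, substitute a small-time parametrix for the wave kernel, rescale $\xi=\sqrt{\lambda}\eta$, apply stationary phase (with the critical set pinned at $q=q'$, $t=0$, $\eta$ normal to $H$), and then observe that each of the finitely many derivatives of $f_k(\ell^{-1}x')$ appearing in the remainder contributes only a factor $\ell^{-1}=|\log\lambda|^K$, which is absorbed by the power $\lambda^{-1/2}$; the paper makes this concrete by noting the Hörmander remainder is $O(\lambda^{-1/2}\|f_k(\ell^{-1}\cdot)\|_{C^6})=O(\lambda^{-1/2}\ell^{-6})=O(\lambda^{-1/2+\gamma})$, together with a separate $O(\ell^{-1}\lambda^{-1/2})$ cost per integration by parts used to localize near the critical manifold — the same two sources of loss you identify.
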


\begin{proof} 

Until the end, the proof is the same as in \cite{JZ, JZ2}. In \cite{JZ2} we assumed
that the boundary of $M$ was (weakly) concave and here we assume that the curve
$H$ is a geodesic.

 By  wave front set considerations (see \cite{z,HHHZ}), there  exists $\delta_0 > 0$ so that the 
\begin{equation} \label{ep0} \mbox{sing supp} S_{ f_k(\ell^{-1} x')}(t) \cap (- \delta, \delta) = \{0\}. \end{equation}The singular times $t \not= 0$ are the
lengths of   $H$-orthogonal
geodesics, i.e. geodesics which intersect $H$ orthogonally at both endpoints. The singularities at $t \not= 0$ are of lower order than
the singularity at $t = 0$. 

 We now  determine the singularity at $t = 0$ when we introduce the small-scale $f_{\ell}$, using a Hormander style small time  parametrix for the even wave kernel $E(t, x, q)$.There exists an amplitude $A$ so that modulo
smoothing operators,
$$E(t, x, q) \equiv \int_{T_q^* X} A(t, x, q, \xi) \exp \left(i (\langle Exp_q^{-1}(x), \xi \rangle - t |\xi|)\right) d \xi.$$
The amplitude has order zero.
We then take the Cauchy data on $H$.

Changing variables $\xi \to \sqrt{\lambda} \xi$, the trace  may be expressed in the form,
\begin{equation} \label{TRACE} \begin{array}{l} \lambda \int_{\mathbb{R}} \int_{H} \int_{H} \int_{T_q^* X} \hat{\rho}(t) e^{it  \sqrt{\lambda}} \chi_q(\xi) \\ \\A(t, q', q, \lambda \xi) \exp( i \sqrt{\lambda} \left(\langle Exp_q^{-1}(q'), \xi \rangle - t |\xi|\right))\\ \\
 f_{k}(\ell^{-1}q) f_{k}(\ell^{-1}q)  d \xi dt dS(q) dS(q'). \end{array} \end{equation}
 We now compute
the asymptotics by the stationary phase method.
As in \cite{JZ}, we  calculate the expansion by putting
the integral over $T^*_q X $ in polar coordinates,
$$\begin{array}{l}  \int_{\mathbb{R}} \int_0^{\infty} \int_{H} \int_{H} \int_{S_q^* X} \hat{\rho}(t) e^{it  \sqrt{\lambda}} \chi_q(\xi) A(t, q', q) \\ \\\exp( i \sqrt{\lambda} \rho \left(\langle Exp_q^{-1}(q'), \omega \rangle - t  \right)) \\ \\
f_{k}(\ell^{-1} q) f_{k}(\ell^{-1} q')  \rho^{n-1} d \rho d \omega dt dS(q) dS(q'), \end{array} $$
and in these coordinates the phase becomes,
$$ \Psi(q, \rho, t, \omega, q'): =  t +  \rho \langle Exp_q^{-1}(q'), \omega \rangle - t \rho.$$
We get a non-degenerate critical point  in the variables $(t, \rho)$ when $\rho =1, t =  \langle Exp_q^{-1}(q'), \omega \rangle.$

We are mainly interested in the singularity at $t = 0$ and consider that first. If $t = 0$ then $\langle Exp_q^{-1}(q'), \omega \rangle = 0$.
Since $\omega $ is an arbitrary unit co-vector at $q$, this implies $q = q'$. Due to this constraint we need to consider the stationary
phase asymptotics of the full integral, and find that there is a non-degenerate critical manifold given by the diagonal $\rm{diag}(H \times H)
\times S^*_q M$. We write $\omega = (\xi', \xi_d)$ where $\xi_d = \sqrt{1 - |\xi'|^2} \nu$ where $\nu$ is the unit normal to $H$.

When $H$ is totally geodesic (our main application being a closed geodesic of a negatively curved surface), then $Exp_q^{-1} q'$
is tangent to $H$ and thus, $\langle Exp_q^{-1}(q'), \nu \rangle = 0$. Hence we replace the integral over $S^* M$ by an equivalent
integral over the unit coball bundle $B^* H$ of $H$ and replace $\omega = (\xi', \xi_d)$ by $\xi'$. Rescaling,  our integral for fixed $q$ is
$$\begin{array}{l} 
 \frac{1}{\sqrt{\lambda}} \int_0^{\infty} \int_{\R} \int_{B^*H }  e^{i  \sqrt{\lambda} (  t +  \rho \langle Exp_q^{-1}(q'), \xi' \rangle - t \rho)}\\ \\
  \chi_q(\omega) \tilde{A}(t, q', q, \rho \omega)  
f_{k}(\ell^{-1} q) f_{k}(\ell^{-1} q')    \hat{\rho}(t) d \xi'   dS(q') dt d \rho,
\end{array}$$
for another amplitude $\tilde{A}$. We fix $q$ and consider the oscillatory integral in $(q',  \xi', t, \rho)$.

In addition to the non-degenenerate $(\rho, t)$ block there is the $(q', \xi')$ block, which is non-degenerate and has
Hessian determinant one.  Thus, the singularity at $t = 0$ produces the term,
\begin{equation} \eqref{TRACE}\; = \; \int_H A(0, q, q, \nu_q) |f_{k}(\ell^{-1} q)|^2 d S(q) + O(\lambda^{-\half}). \end{equation}

The remainder estimate in Hormander has two contributions. To localize
near the critical point, one needs to integrate by parts, and each time one
gets $O(\ell^{-1} \lambda^{-\half})$. Thus,  when $\ell $  is given by \eqref{ell}, this part of the remainder is $O(\lambda^{-\half + \gamma}$ for
any $\gamma > 0$. Secondly the remainder in the stationary phase expansion
to leading order is $$O(\lambda^{-\half} ||f_{k}(|\ell^{-1} q)||_{C^6}) = O(\lambda^{-\half} \ell^{-6}) = O(\lambda^{-\half + \gamma}), \;\; \forall \gamma > 0. $$
Thus, the remainders are as stated in the Lemma.

\end{proof}
Proposition \ref{KUZLEM} then follows by a standard Tauberian theorem \cite{SV}
(Appendix B).

\subsection{Uniformity for small balls}

As in the logarithmic QER proof, we need to extract a density one subsequence for which the Kuznecov
bounds hold uniformly for all $x_k$. We use the same Chebyshev argument but
based on Propositon \ref{CONOR} rather than on variance sums. The terms
are positive and therefore the same Chebyshev argument gives a subsequence
of density one for which the limits.

We only consider Dirichlet data of Neumann eigenfunctions, since the same
argument is valid for Neumann data of Dirichlet eigenfunctions.
In the notation of \cite{H} (Step 2, p. 3283) we define the `exceptional sets'
$$\Lambda_{x_k}(h): = \{j: E_j \in [1, 1 + h], \;\;|\langle u_j^b, f_{k}(\ell^{-1} x_k') \rangle|^2
\geq (\log \lambda_j)^{1/3} \lambda_j^{-\half} ||f_k(\ell^{-1}(q)||_{L^2(H)}^2.  $$

We again apply  Chebyshev's inequality $\rm{Prob}\{X \geq C\} \leq \frac{1}{C} {\bf E} X$  with $X = \frac{\;|\langle u_j^b, f_{k}(\ell^{-1} x_k') \rangle|^2}{||f_{\ell}||_{L^2(H)}^2}$ with respect to tormalized counting measure of
$E_j \in [1, 1 + h]$ and with $C = \lambda_j^{-\half}   |\log \lambda_j|^{1/3}$.\footnote{Here,
the choice of $1/3$ is rather arbitrary: it could be any number $< \half$. We pick
it to be $ \frac{1}{3}$ to obtain the same constraints on $\alpha$ as for variance sums. } The Kuznecov sum estimate of Proposition \ref{KUZLEM} says that the expected
 valued of $X$ is $O(\lambda^{-1})$.
 It follows that
$$\frac{\# \Lambda_{x_k}^b(h)}{N(h)} \leq (|\log \lambda|)^{-1/3}. $$

Again  define
$$\Lambda(h) = \bigcup_{k = 1}^{N(h)} \Lambda_{x_k} (h), \;\; N(h) \leq C |\log h|^{\alpha d}, $$and
$$\Gamma(h): = \{j: E_j \in [1, 1 + h]\} \backslash \Lambda(h). $$

Adding the 
Chebychev  upper bounds for the $|\log h|^{\alpha d}$ exceptional sets    gives 
$$\frac{\# \Lambda(h)}{N(h)} \leq(|\log h|)^{\alpha d} |\log h|^{-1/3},$$ hence
$$\frac{\# \Gamma(h)}{\# \{j: E_j \in [1, 1_ h]}  \geq 1 -(|\log h|)^{\alpha d - \frac{1}{3}}.$$
Again if $\alpha < \frac{1}{3d}$ we obtain a subsequence of density one.

If $j \in \Gamma(h)$ then
$$|\langle u_j, f_{k}(\ell^{-1} x_k') \rangle|^2
\leq (\log \lambda_j)^{1/3} \lambda_j^{-\half} ||f_{\ell}||_{L^2(H)}^2 $$
uniformly for all $x_k$.

This completes the proof of Proposition \ref{KUZ}.



\section{\label{CONCLUSION} Conclusion of proof}

We now restrict to the surfaces of Theorem \ref{theo1} in dimension $d = 2$,
and conclude the proof along the lines sketched in the Introduction.
We only consider Dirichlet data of even eigenfuntions $\phi_j$; the proof for Neumann data
of odd eigenfunctions is the same. 
The lower bound of Proposition \ref{KUZ} and  \ref{HeRres}  proves that
for a density one subsequence,
$$\int_{B(x_k, \ell) \cap H} |\phi_j|^2 dS_g \geq C_g \ell. $$
Combining with the sup norm estimate
$$|\phi_j|_{L^{\infty}} \leq C_g \frac{\lambda^{\frac{1}{4}}}{\sqrt{\log \lambda}}, $$
we find that along the density one subsequence,
\begin{equation} \label{B} \int_{B(x_k, \ell )\cap H}| \phi_j| \geq \lambda^{-\frac{1}{4}}\sqrt{\log \lambda} \;\ell. 
\end{equation}

But by Proposition \ref{KUZ}, along the density one subsequence
\begin{equation} \label{KUZEST}  |\int_{B(x_k, \ell) \cap H} \phi_j ds|  \leq C_0  \ell \lambda_j^{-\frac{1}{4}} (\log \lambda_j)^{1/3}. \end{equation}
Comparing \eqref{KUZEST} and \eqref{B} shows that 
\begin{equation} \int_{B(x_k, \ell )\cap H } |\phi_j| > \int_{B(x_k, \ell ) \cap H} \phi_j \end{equation} for  all balls in the cover \eqref{COVER}  for sufficiently large $j$ in the density one subsequence. It follows that $\phi_j |_{B(x_k, \ell) \cap H}$ must have
a zero for every $k$.

The rest of the proof of Theorem \ref{theo1} is the same as in \cite{JZ}.


\begin{thebibliography}{HHHZ13}

\bibitem[B{\'e}r77]{Be}
Pierre~H. B{\'e}rard.
\newblock On the wave equation on a compact {R}iemannian manifold without
  conjugate points.
\newblock {\em Math. Z.}, 155(3):249--276, 1977.


\bibitem[Bu]{Bu}{\sc  Burq, N.} Quantum ergodicity of boundary values of eigenfunctions: a control theory approach. {\it Canad. Math. Bull.} {\bf 48} (2005), no. 1, p. 3-15.

\bibitem[CTZ13]{ctz}
Hans Christianson, John~A. Toth, and Steve Zelditch.
\newblock Quantum ergodic restriction for {C}auchy data: interior que and
  restricted que.
\newblock {\em Math. Res. Lett.}, 20(3):465--475, 2013.


     \bibitem[GL]{GL} {\sc G\'erard, P. and  Leichtnam, E.}  Ergodic properties of eigenfunctions for the Dirichlet problem. {\it Duke Math. J.} {\bf 71} (1993), no. 2,  p. 559-607.

\bibitem[GRS13]{GRS}
Amit Ghosh, Andre Reznikov, and Peter Sarnak.
\newblock Nodal {D}omains of {M}aass {F}orms {I}.
\newblock {\em Geom. Funct. Anal.}, 23(5):1515--1568, 2013.

\bibitem[GRS15]{GRS15}
Amit Ghosh, Andre Reznikov, and Peter Sarnak.
\newblock Nodal {D}omains of {M}aass {F}orms {II}, in preparation.

\bibitem[H]{H} X.  Han, 
Small scale quantum ergodicity in negatively curved manifolds, Nonlinearity 28 (2015), 3263-3288 (arXiv:1410.3911).


\bibitem[HHHZ13]{HHHZ}
X.~Han, A.~Hassell, H.~Hezari, and S.~Zelditch.
\newblock Completeness of boundary traces of eigenfunctions, 
\newblock to appear in the Proc. London Math. Soc. {\em arXiv:1311.0935}.



\bibitem[HeR]{HeR} H. Hezari and G. Riviere,   
$L^p$ norms, nodal sets, and quantum ergodicity, arXiv:1411.4078.


\bibitem[H{\"o}r90]{HoI-IV}
Lars H{\"o}rmander.
\newblock {\em The analysis of linear partial differential operators. {I-IV}}.
\newblock Springer Study Edition. Springer-Verlag, Berlin, second edition,
  1990.
\newblock Distribution theory and Fourier analysis.

\bibitem[HZ04]{HZ}
Andrew Hassell and Steve Zelditch.
\newblock Quantum ergodicity of boundary values of eigenfunctions.
\newblock {\em Comm. Math. Phys.}, 248(1):119--168, 2004.

\bibitem[JJ]{JS} S.  Jang and J.  Jung Quantum ergodicity and the number of nodal domains of eigenfunctions, rXiv:1505.02548.




\bibitem[JZ13]{JZ}
J.~Jung and S.~Zelditch.
\newblock Number of nodal domains and singular points of eigenfunctions of
  negatively curved surfaces with an isometric involution.
\newblock to appear in Jour. Diff. Geom {\em arXiv:1310.2919}, 2013.

\bibitem[JZ14]{JZ2}
J.~Jung and S.~Zelditch.
\newblock 
Number of nodal domains of eigenfunctions on non-positively curved surfaces with concave boundary,
\newblock to appear in {\em Math. Ann.}  {\em arXiv:1401.4520}, 2014.


\bibitem[SV]{SV} Y. Safarov and D. Vassiliev, {\it The asymptotic distribution
of eigenvalues of partial differential operators. Translated from
the Russian manuscript by the authors}. Translations of
Mathematical Monographs, 155. American Mathematical Society,
Providence, RI, 1997.

    \bibitem[Sch]{Schnirelman} A. I. Schnirelman, Ergodic properties of eigenfunctions, {\it Usp. Math. Nauk.} {\bf 29} (1974), p. 181-2.

\bibitem[TZ09]{TZ2}
John~A. Toth and Steve Zelditch.
\newblock Counting nodal lines which touch the boundary of an analytic domain.
\newblock {\em J. Differential Geom.}, 81(3):649--686, 2009.

\bibitem[Zel92]{z}
Steven Zelditch,
\newblock Kuznecov sum formulae and {S}zeg{\H o}\ limit formulae on manifolds.
\newblock {\em Comm. Partial Differential Equations}, 17(1-2):221--260, 1992.

\bibitem[Zel]{Zel} S. Zelditch, 
On the rate of quantum ergodicity. I. Upper bounds. 
Comm. Math. Phys. 160 (1994), no. 1, 81-92. 

  \bibitem[Zw]{Zw}{\sc Zworski,M.}
  Semiclassical Analysis,  Graduate Studies in Mathematics 138, AMS, 2012.


\end{thebibliography}
\end{document}